\numberwithin{equation}{section}
\def\beq{\begin{equation}}
\def\eeq{\end{equation}}
\def\p{\partial}
\def\G{\Gamma}
\def\g{\gamma}
\def\s{\sigma}
\def\e{\varepsilon}
\def\L{{\cal L}}
\def\a{\alpha}
\def\b{\beta}
\def\l{\lambda}
\def\A{{\mathcal A}}
\def\D{{\mathcal D}}
\def\L{{\mathcal L}}
\def\M{{\mathcal M}}
\def\P{{\cal P}}
\def\dim{{\rm dim}}
\def\res{{\rm res}}
\def\wt{\widetilde}
\def\wh{\widehat}
\def\bH{\mathbb H}
\def\bC{\mathbb C}
\def\J{\mathcal J}
\def\bZ{\mathbb Z}
\def\P{\mathcal P}
\def \matrix #1 {\left(\begin{array}{cc} #1 \end{array}\right)}
\newtheorem{theorem}{Theorem}[section]
\newtheorem{corollary}{Corollary}[section]
\newtheorem{lemma}{Lemma}[section]
\newtheorem{remark}{Remark}
\begin{document}

\title{Abelian pole systems and Riemann-Schottky type systems}

\author{Igor Krichever}
\address{Columbia University, New York, USA, and Skolkovo Institute for Science and Technology, and National Research University Higher School of Economics, Moscow, Russia}
\email{krichev@math.columbia.edu}

\begin{abstract} In this survey of  works on a characterization of Jacobians and Prym varieties among indecomposable principally polarized abelian varieties via the soliton theory we focus on a certain circle of ideas and methods which show that the characterization of Jacobians as ppav whose Kummer variety admits a trisecant line and the Pryms as ppav whose Kummer variety admits a pair of symmetric quadrisecants can be seen as an abelian version of pole systems arising in the theory of elliptic solutions to the basic soliton hierarchies. We present also recent results in this direction on the characterization of Jacobians of curves with involution, which were motivated by the theory of two-dimensional integrable hierarchies with symmetries.

\end{abstract}

\maketitle

\section{Introduction}

\medskip

Novikov's conjecture on the Riemann-Schottky problem: \emph{the Jacobians of smooth algebraic curves are precisely those indecomposable principally polarized abelian varieties (ppavs) whose theta-functions provide solutions to the Kadomtsev-Petviashvili (KP) equation}, was the first evidence of nowadays well-established fact: connections between the algebraic geometry and the modern theory of integrable systems is beneficial for both sides. Novikov' conjecture was proved by T.~Shiota in \cite{shiota}.

\smallskip

The first goal of this paper is to present the strongest known characterization of a Jacobian variety in this direction: \emph{an indecomposable ppav $X$ is the Jacobian of a curve if and only if its Kummer variety $K(X)$ has a trisecant line\/} proved in \cite{kr-schot,kr-tri}.
This characterization is called \emph{ Welters' (trisecant) conjecture\/} after the work of Welters \cite{wel1} which was motivated by Novikov's conjecture and Gunning's celebrated theorem \cite{gun1}. The approach to its solution, proposed in \cite{kr-schot}, is general enough to be applicable to a variety of Riemann-Schottky-type problems. In \cite{kr-quad,kr-prym} it was used for a characterization of principally polarized Prym varieties. The latter problem is almost as old and famous as the Riemann-Schottky problem but is much harder.

\smallskip
Our second goal is to present recent results on characterization of Jacobians of curves with involution.
The curves with involution naturally appears as a part of algebraic-geometrical data defining solutions to integrable system with symmetries. Numerous examples of such systems  include the Kadomtsev-Petviashvili hierarchies of type B and C (BKP and CKP hierarhies, respectively) introduced in  \cite{DJKM81,DJKM83}  and  the Novikov-Veselov hierarchy introduced in \cite{nv1,nv2}.

The existence of an involution of a curve is central in proving that the constructed solutions have the necessary symmetry. The solutions corresponding to the same curve are usually parameterized by points of its Prym variety. In other words the existence of involution plus some extra constraints on the divisor of the Baker-Akhiezer function are \emph{ sufficient} conditions ensuring required symmetry. The problem of proving that these conditions are \emph{necessary} for \emph{two-dimensional} integrable hierarchies is much harder and that is the  problem solved in \cite{kr-inv}.

The third and to some extend our primary objective is to take this opportunity to elaborate on motivations underlining the proposed solution of the Riemann-Schottky type problems and to introduce a certain circle of ideas and methods.

May be the most important among them is a mysterious \emph{ generating  property} of two-dimensional linear differential, differential-functional, difference-functional equations. In fact, we will discuss two sources of generating properties. One of them is \emph{local}, and it concerns equations with \emph{ meromorphic} coefficients in one of the variables that have \emph{meromorphic solutions}. The other is \emph{global} and concerns equations with elliptic coefficients that have solutions that are meromorphic sections of a line bundle over an elliptic curve.

\medskip

Three main examples are:

\smallskip
$(i)$ the differential equation
\beq\label{intCM}
(\p_t-\p_x^2+u(x,t))\psi(x,t)=0, \quad u=-2\p_x^2\tau(x,t),
\eeq

$(ii)$ the differential-functional equation
\beq\label{intRS}
\p_t \psi(x,t)=\psi(x+1,t)+w(x,t)\psi(x,t), \quad w(x,t)=\p_t\ln \left(\frac{\tau(x+1,t)}{\tau(x,t)}\right),
\eeq

$(iii)$ the difference-functional equation
\beq\label{intBA}
\psi_{n+1}(x)=\psi_n(x+1)-v_n(x)\psi_n(x),\quad  v_n(x)=\frac{\tau_n(x)\tau_{n+1}(x+1)}{\tau_{n}(x+1)\tau_{n+1}(x)}
\eeq
with unknown functions  $\psi_n(x)\,,{n\in\bZ}$.

Each of these equations (after change of notations for independent variables) is one out of two auxiliary linear problems for the three fundamental equations of the theory of integrable systems: the Kadomtsev-Petviashvili (KP) equation
\beq\label{kp}
{3}u_{yy}=\left(4u_t-6uu_x+u_{xxx}\right)_x\,,
\eeq
the $2D$ Toda equation
\beq\label{2DT}
\p_\xi\p_\eta\varphi_n=e^{\varphi_n-\varphi_n}-e^{\varphi_{n}-\varphi_{n+1}}, \quad \varphi_n=\varphi(x=n,\xi,\eta)\,,
\eeq
and the Bilinear Discrete Hirota equation (BDHE)
\beq\begin{aligned}\label{BDHE}
\tau _n (l+1,m)\tau _n (l,m+1)- \tau _n (l,m)\tau _n (l+1,m+1)+ \\
+\tau _{n+1} (l+1,m)\tau _{n-1} (l,m+1)=0\,,
\end{aligned}
\eeq

\noindent
respectively.

At the first glance all three nonlinear equation: the KP equation, the 2D Toda equation, and the BDHE equation, look very different from each other.  But in the theory of integrable systems it is well-known that these
fundamental soliton equations are in intimate relation: the KP equation is as a continuous limit of the BDHE, and the 2D Toda equation can be obtained in an intermediate step.

\medskip

Assume that in the first two cases $\tau(x,t)$ is an \emph{entire} function of the variable $x$ and a (local) smooth function of the variable $t$, and in the third case $\tau_n(x)$ is a sequence of entire functions of $x$. It turns out that under some generality assumption for each of the above linear equations the answer to the question  \emph{when it has a meromorphic in $x$ solution} is given in terms of equations describing the evolution of  zeros of $\tau$ in the second variable.

To give an idea of these equations and why I called the very existence of them mysterious, as an  instructive example, consider the equation \eqref{intCM}.

Let $\psi$ be a meromorphic solution of \eqref{intCM} with $u=-2\p_x^2 \ln \tau(x,t) $, where $\tau$ is a entire  function of $x$ and smooth function of $t$ in some neighborhood of $t=0$. The generality assumption is that generic zeros of $\tau$ are simple. Consider the Laurent expansions of $\psi$ and $u$ in the neighborhood of a simple zero, $\tau(q(t),t)=0, \p_x\tau(q(t),t)\neq 0$:
\begin{equation}
\begin{aligned}\label{ue}
u&=\frac 2{(x-q)^2}+v+w(x-q)+\ldots;\\
\psi&=\frac{\a}{x-q}+\b+\g(x-q)+\delta(x-q)^2+\ldots.\\
\end{aligned}
\end{equation}
(The coefficients in these expansions $v,w,\ldots; \a,\b,\ldots$ are smooth functions of the variable $t$).
Substitution of (\ref{ue}) in (\ref{intCM}) gives an \emph{infinite} system of equations. The first three of them
are
\begin{equation}\begin{aligned}\label{eq1}
\a\dot q+2\b=0;\\
\dot \a+\a v+2\g=0;\\
\dot\b+v\b-\g\dot q+\a w=0.\\
\end{aligned}
\end{equation}

Taking the $t$-derivative of the first equation and using two others we get the equation

\beq\label{cm50}
\ddot q=2w,\
\eeq
derived  first in \cite{flex}.

\smallskip

We would like to emphasize once again that there is no reason for the fact that the system
\eqref{eq1}  can be reduced to equations for the potential $ u $, only. Even more unexpected
for the author was that, as we will see later, the existence of \emph{one} meromorphic solution of the equation \eqref {intCM} is sufficient for the existence of a \emph{one-parameter} family of meromorphic solutions.
\smallskip

Formally, if we represent $\tau$ as an infinite product,
\beq\label{roots}
\tau(x,t)=c(t)\prod_i(x-q_i(t)),
\eeq
then equation (\ref{cm50}) can be written as the infinite system of equations
\beq\label{cm6}
\ddot q_i=-4\sum_{j\neq i} \frac{1}{(q_i-q_j)^3}\,.
\eeq
Equations (\ref{cm6}) are purely formal because, even if $\tau$ has simple zeros at $t=0$,
then in the general case there is no nontrivial interval in $t$ where the zeros remain simple.
One of the reason to present \eqref{cm6} is that it shows that when $\tau$ is a rational, trigonometric or
elliptic polynomial  then equations \eqref{cm6}  coincide with the equations of
motion for the rational, trigonometrical or elliptic Calogero-Moser (CM) systems, respectively.

In a similar way one can get that the existence of a meromorphic solution for equations \eqref{intRS} and \eqref{intBA} gives equations on zeros of $\tau$ which in the case when $\tau$ is an elliptic polynomial in $x$ turned out to  be equations of motion of the elliptic Ruijsenaars-Schneider (RS) model and nested Bethe ansatz equations, respectively.

\medskip
Recall, that the elliptic CM system with $k$ particles is a Hamiltonian system
with coordinates $q=(q_1,\dots,q_k)$, momentums $p=(p_1,\dots,p_k)$,  the canonical Poisson brackets $\{q_i,p_j\}=\delta_{ij}$, and  the Hamiltonian
\beq\label{HCM}
H=\frac 1 2 \sum_{i=1}^k p_i^2 +\sum_{i\neq j} \wp(q_i-q_j)\,.
\eeq
The corresponding equations of motion
admit the  Lax representation $\dot L=[M,L]$ with
\beq\label{LCM}
L_{ij}\,=\,p_i\delta_{ij}\,+\,2\,(1-\delta_{ij}) \Phi(q_i-q_j,z)
\eeq
where
\beq\label{PhiCM}
\Phi(x,z)=\frac{\s(z-x)}{\s(z)\s(z)}e^{x\zeta(z)}
\eeq
and $\s,\zeta,\wp$ are classical Weierstrass functions.

\medskip
The elliptic RS system is a Hamiltonian system with coordinates $q=(q_1,\dots,q_k)$,
momentums $p=(p_1,\dots,p_k)$,  the canonical Poisson brackets $\{q_i,p_j\}=\delta_{ij}$, and  the Hamiltonian
\beq\label{RSHam}
H=\sum_{i=1}^k f_i
\eeq
where
\beq\label{gadef}
f_i:=e^{p_i}\prod_{j\neq i} \left(\frac{\s(q_i-q_j-1)\s(q_i-q_j+1)}{\s(q_i-q_j)^2}\right)^{1/2}.
\eeq
It is a completely integrable Hamiltonian system, whose equations of motion
admit the Lax representation $\dot L=[M,L]$, where
\beq\label{dLR}
L_{ij}={f_i}\,\Phi{(q_i-q_j-1,z)},\
\qquad i,j=1,\dots, k,
\eeq

The elliptic nested Bethe ansatz equations
are a system of algebraic equations
\beq
\prod_{j}\frac{\s(q_i^n-q_j^{n+1})\s(q_i^n-1-q_j^n)\s(q_i^n-q_j^{n-1}+1)}
{\s(q_i^n-q_j^{n-1})\s(q_i^{n+1}-q_j^n)\s(q_i^n-q_j^{n+1}-1)}=-1
\label{bet}
\eeq
for $k$ unknown functions $q_i=\{q_i^n\},\ i=1,\ldots, k$, of a discrete
time variable $n\in \mathbb Z$.

The above systems are usually called elliptic \emph{pole} systems, since they describe the dependence of the poles of the elliptic solutions of the KP, $ 2D $ Toda and BDHE equations, respectively.
A correspondence between finite-dimensional integrable systems and the pole
systems of various soliton equations was considered in \cite{krbab,zab,krwz,kr3}. In \cite{akv} it was generalized to the  case of \emph{field analogues} of CM type systems.

\medskip
The most general form of the function $ \tau $, known to the author so far, for which the equations for its zeros are not formal, is the case of \emph{abelian} functions, that is, when $ \tau $ has the form
\beq\label{tau1}
\tau=\tau(Ux+z,t)\,,
\eeq
where $x, t\in\mathbb C$ and $z\in \mathbb C^n$ are independent variables,
$0\ne U\in\mathbb Cn$, and for all $t$ the function
$\tau(\cdot,t)$ is a holomorphic section of a line bundle $\L=\L(t)$ on an
abelian variety $X=\bC^n/\Lambda$, i.e., for all $\l\in\Lambda$
it satisfies the monodromy relations
\begin{equation}\label{taumon}
\tau(z+\l,t)=e^{a_\l\cdot z+b_\l}\tau(z,t),
\end{equation}
for some $a_\l\in\mathbb C^n$, $b_\l=b_\l(y,t)\in\mathbb C$.

It is tempting to call them \emph{abelian} CM, RS and nested Bethe ansatz equations. As  we shall see below they are central  for  the proof of three particular cases of the Welters conjecture.

\section{Riemann-Schottky problem}

Let $\bH_g:=\{B\in M_g(\bC)\mid {}^tB=B,\ {Im} (B)>0\}$ be the Siegel upper half space.
For $B\in\bH_g$ let $\Lambda:=\Lambda_B:=\bZ^g+B\bZ^g$ and $X:=X_B:=\bC^g/\Lambda_B$.
Riemann's theta function
\beq\label{teta1}
\theta(z):=\theta(z,B):=\sum_{m\in\bZ^g}e^{2\pi i(m,z)+\pi i(m,Bm)},\quad
(m,z)=m_1z_1+\cdots+m_gz_g,
\eeq
is holomorphic and $\Lambda$-quasi\-periodic in $z\in\bC^g$.

The factor space $\bH_g/Sp(2g,\bZ)\simeq\A_g$ is the moduli space of $g$-dimensional ppavs.
A ppav $(X,[\Theta])\in\A_g$ is said to be \emph{indecomposable\/}
if the zero-divisor $\Theta$  of $\theta$ is irreducible.

Let $\M_g$ be the moduli space of nonsingular curves of genus $g$,
and let $J\colon\M_g\to\A_g$ be the Jacobi map defined by the composition of maps $\M_g\to \bH_g\to \A_g$. The first one requires a choice of  a symplectic basis $a_i$, $b_i$ ($i=1,\dots,g$) of
$H_1(\G,\bZ)$ which defines a basis $\omega_1$, \dots, $\omega_g$ of the space of
holomorphic 1-forms on $\G$ such that $\int_{a_i}\omega_j=\delta_{ij}$, and then
the \emph{period matrix\/} and the \emph{Jacobian variety\/} of $\G$ by
\beq\notag
B:=\Big(\int_{b_i}\omega_j\Big)\in\bH_g \quad\hbox{and}\quad
J(\G):=(X_B,[\Theta_B])\in\A_g\,,
\eeq
respectively.

$J(\G)$ is indecomposable and the Jacobi map $J$ is injective (Torelli's theorem).
The \emph{(Riemann-)Schottky problem\/} is the problem of characterizing the
Jacobi locus $\J_g:=J(\M_g)$ or its closure $\overline{\J_g}$ in $\A_g$.
For $g=2$,~$3$ the dimensions of $\M_g$ and $\A_g$ coincide,
and hence $\overline{\J_g}=\A_g$ by Torelli's theorem.
Since $\J_4$ is of codimension $1$ in $\A_4$, the case $g=4$ is the first nontrivial case of the Riemann-Schottky problem.

A nontrivial relation for the Thetanullwerte of a curve of genus~$4$
was obtained by F.~Schottky \cite{schottky} in 1888, giving a modular form which vanishes on
$\J_4$, and hence at least a \emph{local\/} solution of the Riemann-Schottky
problem in $g=4$, i.e., $\overline{\J_4}$ is \emph{an irreducible component
of\/} the zero locus $\mathcal S_4$ of the Schottky relation.
The irreducibility of $\mathcal S_4$ was proved by Igusa \cite{igusa}
in 1981, establishing $\overline{\J_4}=\mathcal S_4$, an effective answer
to the Riemann-Schottky problem in genus 4.

Generalization of the Schottky relation to a curve of higher genus, the so-called Schottky-Jung relations, formulated as a conjecture by Schottky and Jung \cite{schot-jung} in 1909,
were proved by Farkas-Rauch \cite{far} in 1970. Later, van~Geemen \cite{geemen} proved that the Schottky-Jung relations give a local solution of the Riemann-Schottky problem. They do not give a global solution when $g>4$, since the variety they define has extra components already for $g=5$ (Donagi \cite{donagi2}).

\medskip

Over more than 120 year-long history of the Riemann-Schottky problem,
quite a few geometric characterizations of the Jacobians have been obtained.
None of them provides an explicit system of equations for the image of the Jacobian locus
in the projective space under the level two theta imbedding.

Following Mumford's review with a remark on Fay's trisecant formula
\cite{mum:c+j}, and the advent of algebraic geometrical integration scheme in the soliton theory  \cite{kr1,kr2,mum} and Novikov's
conjecture, much progress was made in the 1980s to
characterizing Jacobians and Pryms using Fay-like formulas and KP-like
equations.

Let us first describe the trisecant identity in geometric terms.
The Kummer variety $K(X)$ of $X\in\A_g$ is the image of the Kummer map
\beq\label{kum}
K=K_X\colon X\ni z\longmapsto
\bigl(:\Theta[\e,0](z):\bigr)\in \mathbb{CP}^{2^g-1}\,,
\eeq
where
$\Theta[\e,0](z)=\theta[\e,0](2z,2B)$ are the level two theta-functions
with half-integer characteristics $\e\in((1/2)\bZ/\bZ)^g$, i.e., they equal
$\theta(2(z+B\e),2B)$ up to some exponential factor so that we have
\beq\label{ThetaQuad}
\theta(z+w)\theta(z-w)=\sum_{\e\in((1/2)\bZ/\bZ)^g}\Theta[\e,0](z)\Theta[\e,0](w)\,.
\eeq
We have $K(-z)=K(z)$ and $K(X)\simeq X/\{\pm1\}$.

A \emph{trisecant\/} of the Kummer variety is a projective line which meets
$K(X)$ at three points. \emph{Fay's trisecant formula\/} states that if $X=J(\G)$, then $K(X)$ has a
family of trisecants parameterized by 4 points $A_i$, $1\le i\le4$, on $\G$.
Gunning proved in \cite{gun1} that, under certain non\-degeneracy
conditions, that the existence of \emph{one-parametric} family of trisecants characterizes the Jacobians.

Gunning's work was extended by Welters who proved that a Jacobian variety can
be characterized by the existence of a formal one-parameter family of flexes of the
Kummer variety \cite{wel}. A flex of the Kummer variety
is a projective line which is tangent to $K(X)$ at some point up to order 2.
It is a limiting case of trisecants when the three intersection points come together.

In \cite{arb-decon} Arbarello and De~Concini showed that the assumption in
Welters' characterization is equivalent to a infinite sequence of
partial differential equations known as the KP hierarchy, and proved
that only a first finite number of equations in the sequence are sufficient,
by giving an explicit bound for the number of
equations, $N=[(3/2)^gg!]$, based on the degree of $K(X)$.

An algebraic argument based on earlier results of Burchnall, Chaundy
and the author \cite{ch,kr1,kr2} characterizes the Jacobians
using a commutative ring $R$ of ordinary differential operators associated
to a solution of the KP hierarchy. A simple counting argument then shows
that only the first $2g+1$ time evolutions in the hierarchy are needed
to obtain $R$.
The $2g+1$ KP flows yield a finite number of differential equations for
the Riemann theta function $\theta$ of $X$, to characterize a Jacobian.
As for the number of equations, an easy estimate shows that $4g^2$ is
enough, although more careful argument should yield a better bound.

Novikov's conjecture, that just the first equation of the hierarchy ($N=1$!),
suffices to characterize the Jacobians, i.e.:

\medskip
\noindent
\emph{an indecomposable symmetric matrix $B$ with positive definite imaginary part is the period matrix of a basis of normalized holomorphic differentials on a smooth  algebraic curve $\G$ if and only if there are vectors $U\neq 0,V,W$, such that the function
\beq\label{u} u(x,y,t)=2\p_x^2 \ln \theta (Ux+Vy+Wt+Z|B),
\eeq
satisfies the KP equation \eqref{kp}}

\smallskip
\noindent
for quite some time seemed to be the strongest possible  characterization within the reach of the soliton theory.

\section{Welter's conjecture}

Novikov's conjecture is equivalent to the statement that the Jacobians are
characterized by the existence of length $3$ formal jet of flexes.

In \cite{wel1} Welters formulated the question: \emph{if the Kummer variety $K(X)$ has \emph{one\/}
trisecant, does it follow that $X$ is a Jacobian ?} In fact, there are
three particular cases of the Welters conjecture, corresponding to three possible configurations of
the intersection points $(a,b,c)$ of $K(X)$ and the trisecant:
\begin{itemize}
\item[(i)] all three points coincide $(a=b=c)$;
\item[(ii)] two of them coincide $(a=b\neq c)$;
\item[(iii)] all three intersection points are distinct
$(a\neq b\neq c\neq a)$.
\end{itemize}
Of course the first two cases can be regarded as degenerations of the general case~(iii).
However, when the existence of only one trisecant is assumed, all three cases are
independent and require its own approach. The approaches used in \cite{kr-schot,kr-tri} were based on the theories of three main soliton hierarchies (see details in \cite {kr-shiot}): the KP hierarchy for (i), the 2D Toda hierarchy for (ii) and the Bilinear Discrete Hirota Equations (BDHE) for (iii).
Recently, pure algebraic proof of the first two cases of the trisecant conjecture were obtained in \cite{acp}.

\begin{theorem}\label{th1.1} An indecomposable principally polarized abelian variety $(X,\theta)$
is the Jacobian variety of a smooth algebraic curve of genus g if and only if there exist $g$-dimensional vectors
$U\neq 0, V,A $ , and constants $p$ and $E$ such that one of the following three equivalent conditions are satisfied:

$(A)$ equality \eqref{intCM} with $\tau=\theta (Ux+Vt+Z)$ and
\beq\label{intpsi}\psi=\frac{\theta(A+Ux+Vt+Z)} {\theta(Ux+Vt+Z)}\, e^{p\,x+E\,t}
\eeq
holds, for an arbitrary vector $Z$;

\medskip
$(B)$ for all theta characteristics $\e\in (\frac 1 2\bZ/\bZ)^g$
$$ 
\left(\p_V-\p_U^2-2p\,\p_U+(E-p^2)\right)\, \Theta[\e,0](A/2)=0
$$ 
(here and below $\p_U$, $\p_V$ are the derivatives along the vectors $U$ and $V$, respectively).

\medskip
$(C)$ on the theta-divisor $\Theta=\{Z\in X\,\mid\, \theta(Z)=0\}$ the equation
\beq\label{cm70}
[(\p_V\theta)^2-(\p_U^2\theta)^2]\p_U^2\theta
+2[\p_U^2\theta\p_U^3\theta-\p_V\theta\p_U\p_V\theta]\p_U\theta+
[\p_V^2\theta-\p_U^4\theta](\p_U\theta)^2=0
\eeq
holds.
\end{theorem}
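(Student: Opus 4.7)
My plan is to treat the easy direction and the equivalences first, then focus on the substantive content, which is the implication $(A)\Rightarrow X=J(\G)$. For Jacobians, the existence of $U,V,A,p,E$ satisfying $(A)$ is the classical Its--Matveev construction: $U$ and $V$ are the tangent vectors to the Abel image at the first two KP times of a fixed smooth point of $\G$; $p,E$ are the first Laurent coefficients coming from the local parameter at that point; and $\psi$ given by \eqref{intpsi} is then the normalized \BA function, which satisfies \eqref{intCM} by the algebraic-geometric integration of KP.

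To establish the equivalences within $(A),(B),(C)$, I would substitute the Ansatz for $\psi$ and $u=-2\p_x^2\ln\theta(Ux+Vt+Z)$ into \eqref{intCM}, clear the factor $\theta(Ux+Vt+Z)^2$, and rewrite the cross-term $\theta(A+W)\theta(W)$ with $W=Ux+Vt+Z$ via the quadratic identity \eqref{ThetaQuad} as a sum over level-two theta functions $\Theta[\e,0](A/2)\Theta[\e,0](A/2+W)$. Because the translated level-two thetas form a basis of the space of sections of $2\Theta$, equation $(A)$ becomes equivalent to the vanishing of each of the $2^g$ coefficients, which is exactly condition $(B)$. For $(A)\Leftrightarrow(C)$, the cleared equation is a polynomial identity in $\theta$ and its $U$- and $V$-derivatives in $W$; a direct computation (or equivalently the pole analysis \eqref{ue}--\eqref{eq1} applied to the Laurent expansion of $\psi$ at a simple zero of $\tau$) shows that its restriction to $\{\theta=0\}$ is precisely \eqref{cm70}, and conversely Arbarello--De~Concini's divisibility argument lifts \eqref{cm70} back to the full equation.

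The hard direction $(A)\Rightarrow X=J(\G)$ is the content of \cite{kr-schot,kr-tri,kr-shiot}. Fix generic $Z$ and set $\tau(x,t):=\theta(Ux+Vt+Z)$. Then $(A)$ provides a meromorphic-in-$x$ solution $\psi$ of \eqref{intCM}, and the Laurent analysis \eqref{ue}--\eqref{cm50} at each simple zero $q_i(t)$ of $\tau$ produces equations of motion for the $q_i$. Because $\tau$ is an abelian function along the line $x\mapsto Ux+Vt+Z$, these are the \emph{abelian Calogero--Moser equations} advertised in the introduction, and they are global in $t$ thanks to the abelian structure rather than merely formal as in \eqref{cm6}. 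The next step is the \emph{generating property}: the existence of one meromorphic solution forces the existence of an entire family, from which one constructs, for each $(t,Z)$, a nontrivial commutative ring of ordinary differential operators in $x$ having $\psi$ as a common eigenfunction. The Burchnall--Chaundy--Krichever correspondence then turns this ring into a projective algebraic curve $\G$ with a distinguished smooth point and local parameter, and the abelian integrals of the second kind reconstruct the KP datum, so that $X=J(\G)$ with $U$ the prescribed tangent vector.

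The main obstacle is the generating property itself --- upgrading a single scalar meromorphic solution to a commutative ring of sufficiently high rank. Concretely, one must show that translates of the flow by a suitable additional vector preserve the meromorphy of $\psi$, that the induced orbit of the pole dynamics inside $X$ does not collapse, and that the corresponding higher commuting flows fit together into a ring whose spectral curve has the right genus. This is exactly where the abelian refinement of the elliptic pole systems of \cite{kr-schot,kr-shiot} is essential, and where the approach departs from Shiota's original route through the full KP hierarchy. Once the commutative ring is produced, the identification $X=J(\G)$ reduces to classical algebraic-geometric integration as in \cite{kr1,kr2}.
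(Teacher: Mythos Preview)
Your broad outline matches the paper's route: $(A)\Leftrightarrow(B)$ via the addition formula \eqref{ThetaQuad}, $(A)\Rightarrow(C)$ via the pole analysis \eqref{ue}--\eqref{cm50}, the ``only if'' via the Baker--Akhiezer construction, and the hard direction via a commutative ring of ordinary differential operators and Burchnall--Chaundy--Krichever. But the substantive content of the hard direction is missing, and your ``main obstacle'' paragraph does not describe what the paper actually does. The paper runs the argument from $(C)$, not from the single $\psi$ of $(A)$; it never proves $(C)\Rightarrow(A)$ by a divisibility argument---the equivalence closes cyclically via $(C)\Rightarrow\text{Jacobian}\Rightarrow(A)$. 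Starting from \eqref{cm50} alone one constructs a \emph{formal wave solution} $\psi=e^{kx+k^2t}\bigl(1+\sum_s\xi_s(z,t)k^{-s}\bigr)$ of \eqref{intCM} whose coefficients $\xi_s$ are meromorphic with simple poles on $\Theta^U(t)$, and then fixes a $\lambda$-periodic normalization unique up to a $\partial_U$-invariant factor. This normalization is what makes the associated pseudodifferential operator $\L$ with $\L\psi=k\psi$ have coefficients that are global abelian functions on $X$.

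The decisive technical lemma---the step that actually produces the commutative ring---is that the functions $F_m=\res_\partial\L^m$ have poles of order at most \emph{two} on $\Theta$. Since such abelian functions lie in a space of dimension $\leq 2^g$, one obtains linear relations among the $F_m$, and from these one builds honest \emph{differential} operators $L_n$ commuting with $\partial_t-\partial_x^2+u$ and with one another; this finiteness is what your ``orbit does not collapse / right genus'' remarks gesture at but do not supply. Finally, the identification $X=J(\Gamma)$ requires one more nontrivial step you omit: the ring gives a map $j\colon X\setminus\Sigma\to\overline{\mathrm{Pic}}(\Gamma)$, the KP flows give $i_Z\colon J(\Gamma)\hookrightarrow X$, and one concludes by invoking that the generalized Jacobian of an algebraic curve is compact if and only if the curve is \emph{smooth}, which forces $\Gamma$ smooth and $i_Z$ an isomorphism. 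None of these three ingredients---the formal wave construction from \eqref{cm50}, the second-order pole bound on $F_m$, and the compactness/smoothness argument---appears in your proposal.
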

The direct substitution of the expression (\ref{intpsi}) in equation (\ref{intCM}) and the use of the addition formula for the Riemann theta-functions shows the equivalence of conditions $(A)$ and $(B)$ in the theorem.
Condition $(B)$ 
means that the image of the point $A/2$ under the Kummer map is an inflection point (case~(i) of Welters' conjecture).

Condition $(C)$, which we call the abelian CM system is the relation that is \emph{really used\/} in the proof of the theorem. Formally it is weaker than the other two conditions because its derivation does not use an explicit form of the solution $\psi$ of equation (\ref{intCM}), but requires only that $\psi$ is \emph{meromorphic solution}. The latter, as we have seen, implies equation \eqref{cm50}. Expanding the function $\theta$
in a neighborhood of a point $z\in\Theta:=\{z\mid\theta(z)=0\}$ such that
$\p_U\theta(z)\ne0$, and noting that the latter condition holds on a dense subset of $\Theta$ since $B$ is indecomposable, it is easy to see that equation (\ref{cm50}) is equivalent to (\ref{cm70}).

Equation (\ref{intCM}) is one of the two auxiliary linear problems for the KP equation.
For the author, the motivation to consider not the whole KP equation but just one of its auxiliary linear problem was his earlier work \cite{kr3} on the elliptic Calogero-Moser (CM) system, where it was observed for the first time that equation (\ref{intCM}) is all what one needs to construct the elliptic solutions of the KP equation.

\medskip
The proof of Welters' conjecture was completed in \cite{kr-tri}. First, here is
the theorem which treats case~(ii) of the conjecture:

\begin{theorem}\label{th1.2} An indecomposable, principally polarized abelian variety $(X,\theta)$
is the Jacobian of a smooth curve of genus g if and only if
there exist non-zero $g$-dimensional vectors
$U\neq A \pmod \Lambda$, $V$ constants $p,E$, such that one of the following equivalent conditions
holds:

$(A)$  equation \eqref{intRS} with $\tau=\theta(Ux+Vt+Z)$ and $\psi$ as in \eqref{intpsi}
holds for an arbitrary $Z$.

\medskip
$(B)$ The equations
$$ 
\p_{V}\Theta[\e,0]\left((A-U)/2\right)-e^{p}\Theta[\e,0]\left((A+U)/2\right)
+E\Theta[\e,0]\left((A-U)/2\right)=0,
$$
are satisfied for all $\e\in({\frac 1 2}\bZ/\bZ)^g$. Here and below $\p_V$ is the constant
vector field on $\mathbb{C}^g$ corresponding to the vector $V$.

\medskip
$(C)$ The equation
\beq\label{cm7}
\p_V\left[\theta(Z+U)\,\theta(Z-U)\right]\p_V\theta(Z)=
\left[\theta(Z+U)\,\theta(Z-U)\right]\p^2_{VV}\theta(Z)
\eeq
is valid on the theta-divisor $\Theta=\{Z\in X\,\mid\, \theta(Z)=0\}$.
\end{theorem}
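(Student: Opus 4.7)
The proof follows the same three-step template as Theorem~\ref{th1.1}, but with the KP auxiliary linear problem \eqref{intCM} replaced by the 2D~Toda auxiliary problem \eqref{intRS}. The plan is to establish (A)$\iff$(B)$\Rightarrow$(C) by direct theta-function computations, and then to promote (C) alone to the statement that $X$ is a Jacobian by exploiting the generating property of \eqref{intRS}. The converse direction --- Jacobians satisfy (A) --- is classical \BA theory: take a smooth curve $\G$ with two marked points $P^{\pm}$, set $U$ to be the image of $P^+-P^-$ in $J(\G)$ and $V$ the $b$-period vector of the second-kind differential dual to a chosen local parameter, and the resulting \BA function admits the theta representation \eqref{intpsi} and solves \eqref{intRS} by construction.

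For (A)$\iff$(B): substituting $\tau=\theta(Ux+Vt+Z)$ and $\psi$ from \eqref{intpsi} into \eqref{intRS} and applying the addition formula \eqref{ThetaQuad} to separate the dependence on $(x,t,Z)$ from the fixed data, the equation reduces to a linear combination of products $\Theta[\e,0](Ux+Vt+Z)$ with coefficients precisely the expressions in (B) evaluated at $(A\pm U)/2$; vanishing for all $Z$ then forces the coefficients to vanish. For (A)$\Rightarrow$(C): fix a generic $Z$ so that $\tau$ has simple zeros along a smooth analytic hypersurface $x=q(t)$, expand $\psi$ and $w$ in Laurent series at $x=q(t)$, and substitute into \eqref{intRS}. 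The analogue of the system \eqref{eq1} for the shift equation, after elimination of the momentum-type Laurent coefficients of $\psi$, yields an identity on the theta divisor that is exactly \eqref{cm7}; indecomposability of $B$ ensures $\p_V\theta\not\equiv 0$ on $\Theta$, which is what allows the elimination to be carried out on a dense subset.

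The heart of the argument is (C)$\Rightarrow$ Jacobian. Following the local-generating philosophy outlined in the introduction, the aim is to bootstrap the constraint (C) --- which is exactly the compatibility condition for a single meromorphic solution of \eqref{intRS} --- into the existence of a full spectral family $\psi(x,t;k)$ of meromorphic solutions parameterized by an auxiliary variable $k$. Concretely, one seeks a formal wave solution
\[
\psi(x,t;k)=e^{kx+E(k)t}\Bigl(1+\sum_{s\geq 1}\xi_s(Ux+Vt+Z)\,k^{-s}\Bigr)
\]
with the $\xi_s$ meromorphic on $X$ having poles only on translates of $\Theta$. The recursion for the $\xi_s$ obtained by plugging this ansatz into \eqref{intRS} has an obstruction at each step which, by the same local analysis as before, is equivalent to (C); hence (C) is precisely what allows the induction to proceed to all orders. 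Given this family, one assembles the commutative ring of difference-differential operators in $(x,t)$ of which the $\psi(\cdot;k)$ are common eigenfunctions; its spectral curve $\G$ is smooth of genus $g$, and identifying the Jacobian flows of $\G$ with translations on $X$ establishes $X\simeq J(\G)$ under which $U$ corresponds to the Abel image of $P^+-P^-$.

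The principal obstacle is this last bootstrap. One must turn a codimension-one identity \eqref{cm7} into the infinite sequence of global conditions needed to solve the recursion, control the quasi-periodic behavior of the $\xi_s$ on $X$, and verify that the associated commutative ring has transcendence degree one so that the spectral variety is a curve rather than a higher-dimensional component. It is at this last point that the 2D~Toda approach in \cite{kr-tri} must depart most essentially from the KP approach of \cite{kr-schot}, since the shift structure of \eqref{intRS} forces a different analysis of the quasi-periodic extension of the formal wave function across the theta divisor.
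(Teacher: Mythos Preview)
Your proposal follows the same architecture as the paper, which proves Theorem~\ref{th1.1} in detail in Section~\ref{s:proofs} and asserts that Theorems~\ref{th1.2} and~\ref{th1.3} follow the identical template with the 2D~Toda and BDHE hierarchies in place of KP, referring to \cite{kr-tri} for the specifics. Your treatment of (A)$\iff$(B) via the addition formula and (A)$\Rightarrow$(C) via the Laurent analysis at simple zeros of $\tau$ matches the paper's reasoning.

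There are, however, two imprecisions in your (C)$\Rightarrow$Jacobian sketch worth flagging. First, the wave-function ansatz is not adapted to the shift structure of \eqref{intRS}: since the leading symbol is the translation $T\colon x\mapsto x+1$ rather than $\p_x$, the formal wave solution has the form $k^x e^{(k+\cdots)t}\bigl(1+\sum\xi_s k^{-s}\bigr)$, and the commutative ring one extracts is a ring of ordinary \emph{difference} operators in $T$, not ``difference-differential operators in $(x,t)$''. For such rings the Burchnall--Chaundy/Mumford theory already forces the spectrum to be a curve, so your worry about transcendence degree is misplaced.

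Second, and more seriously, you assert that the spectral curve $\G$ is smooth of genus $g$ and then match Jacobians. In the paper's scheme this is precisely the crux, not a foregone conclusion: one first shows (the analogue of Lemma~\ref{lem6.3}) that the $\xi_s$ can be normalized $\l$-quasi-periodically so the coefficients of the pseudo-difference operator $\L$ are abelian functions on $X$; then one proves the key pole-bound lemma that the residues $F_m=\res\,\L^m$ have at most second-order poles on $\Theta$, forcing only finitely many of them to be independent; finally, the embedding $X\setminus\Sigma\hookrightarrow\overline{\rm Pic}(\G)$ together with the KP/Toda flows gives $J(\G)\hookrightarrow X$, and it is the \emph{compactness} of the generalized Jacobian thus obtained that forces $\G$ to be smooth. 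Your outline names the quasi-periodicity issue but skips the pole-bound lemma and the smoothness-via-compactness step, which are where the actual work in \cite{kr-tri} lies.
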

Recall, that equation (\ref{intRS}) is one of the two auxiliary linear problems for the $2D$ Toda lattice
equation \eqref{2DT}. The idea to use
it for the characterization of the Jacobians was motivated by \cite{kr-schot} and the author's earlier work with Zabrodin \cite{zab}, where
a connection of the theory of elliptic solutions of the $2D$ Toda lattice equations
and the theory of the elliptic Ruijsenaars-Schneider system was established.

The statement $(B)$ is the second particular case of
the trisecant conjecture: the line in $\mathbb{CP}^{2^g-1}$ passing through the points
$K((A-U)/2)$ and $K((A+U)/2)$ of the Kummer variety is tangent
to $K(X)$ at the point $K((A-U)/2)$.

The condition $(C)$ is what we call the abelian RS equation.

\medskip
The affirmative answer to the third particular case, (iii), of Welters' conjecture is given by the following statement.

\begin{theorem}\label{th1.3} An indecomposable, principally polarized abelian variety $(X,\theta)$
is the Jacobian of a smooth curve of genus g if and only if
there exist non-zero $g$-dimensional vectors
$U\neq V\neq A \neq U\, (\bmod \Lambda)$ such that one of the following equivalent
conditions holds:

$(A)$ equation \eqref{intBA} with $\tau_n(x)=\theta(xU+nV+Z)$ and
\beq\label{pd}
\psi_n(x)=\frac{\theta(A+xU+nV+Z)}{\theta(xU+nV+Z)}\, e^{xp+nE},
\eeq
holds for an arbitrary $Z$.

\medskip
$(B)$ The equations
$$ 
\Theta[\e,0]\Big(\frac{A-U-V} 2\Big)+e^{p}\Theta[\e,0]\Big(\frac{A+U-V}2\Big)
=e^E\Theta[\e,0]\Big(\frac{A+V-U}2\Big),
$$ 
are satisfied for all $\e\in({\frac 1 2}\bZ/\bZ)^g$.

\medskip
$(C)$ The equation
\beq\label{cm7d}
\frac{\theta(Z+U)\,\theta(Z-V)\,\theta(Z-U+V)}{\theta(Z-U)\,\theta(Z+V)\,\theta(Z+U-V)}=-1 \pmod \theta
\eeq
is valid on the theta-divisor $\Theta=\{Z\in X\,\mid\, \theta(Z)=0\}$.
\end{theorem}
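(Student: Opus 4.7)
My plan is to establish the three-way equivalence (A) $\Longleftrightarrow$ (B) $\Longleftrightarrow$ (C), then sketch the two implications relating these to $X = J(\Gamma)$, with the bulk of the work concentrated in the converse direction.

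For (A) $\Longleftrightarrow$ (B) I would substitute the ansatz \eqref{pd} into the difference-functional equation \eqref{intBA} with $\tau_n(x)=\theta(xU+nV+Z)$, and apply the addition formula \eqref{ThetaQuad} to rewrite each product of two theta-functions as a linear combination of level-two theta-functions $\Theta[\e,0]$. The resulting identity in $Z$ decomposes precisely into the linear relations of (B) for each half-integer characteristic $\e$. Geometrically these relations express that the three Kummer images $K((A-U-V)/2)$, $K((A+U-V)/2)$, $K((A+V-U)/2)$ are collinear, i.e.\ form a genuine trisecant line of $K(X)$; this is case (iii) of Welters' conjecture.

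For (A) $\Longrightarrow$ (C) I would mimic the local mechanism explained in the Introduction for the differential case \eqref{intCM}: expand $\psi_n(x)$ in a Laurent series near a simple zero $x=q$ of $\tau_n(x)$, substitute into \eqref{intBA}, and match orders. The compatibility of the resulting finite system forces the nested Bethe ansatz identities \eqref{bet} on the zeros of $\tau$. In the abelian setting these zeros lie on $\Theta$, and a direct manipulation translates the discrete system into the identity \eqref{cm7d} modulo $\theta$. The reverse implication (C) $\Longrightarrow$ existence of one meromorphic solution of the form \eqref{pd} is obtained by running this matching backwards, since \eqref{cm7d} is exactly the obstruction that must vanish for the recursive determination of the expansion coefficients to succeed.

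The ``only if'' direction, $X=J(\Gamma) \Longrightarrow$ (C), follows from Fay's trisecant identity: the Kummer variety of a Jacobian carries a four-parameter family of trisecants indexed by quadruples of points on $\Gamma$, and a suitable specialization produces (B). The substantive part is the converse, (C) $\Longrightarrow$ $X$ is a Jacobian, which I would prove following the pattern of Theorems \ref{th1.1}--\ref{th1.2} in \cite{kr-schot,kr-tri}. From (C) one obtains, for generic $Z$, one meromorphic-in-$x$ solution $\psi_n(x;Z)$ of \eqref{intBA}. The decisive step is the \emph{generating property}: to show that the existence of this single meromorphic solution forces the existence of a one-parameter family $\psi_n(x;\kappa)$, where $\kappa$ plays the role of a spectral parameter entering through essential singularities at the infinities. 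Together with the compatibility with the full BDHE hierarchy that the generating property also provides, one extracts a commutative ring of difference operators whose joint spectrum is a smooth projective curve $\Gamma$ of arithmetic genus $g$; the Krichever correspondence then identifies $X$ with $J(\Gamma)$ under the linear embedding $(x,n)\mapsto xU+nV$. The hardest points will be (a) establishing the generating property in the purely discrete BDHE setting, which demands delicate control of the quasi-periodicity of the family under $\Lambda$ — more intricate than in the continuous case of \eqref{intCM} — and (b) verifying smoothness and the correct genus of $\Gamma$, which ultimately rests on the indecomposability of $X$.
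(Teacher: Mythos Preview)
Your proposal is correct and follows essentially the same route as the paper, which works out Theorem~\ref{th1.1} in detail (Section~\ref{s:proofs}) and states that Theorems~\ref{th1.2}--\ref{th1.3} proceed by the identical scheme with the 2D~Toda and BDHE hierarchies replacing KP. Two small refinements to bring you in line with the paper: the ``only if'' direction is obtained not from Fay's identity as such but from the explicit multi-point Baker--Akhiezer construction of Section~6 (the three- and four-point cases produce \eqref{intBA} with the ansatz \eqref{pd} directly), and smoothness of the spectral curve $\Gamma$ is deduced from \emph{compactness} of its generalized Jacobian via the embedding $i_Z\colon J(\Gamma)\hookrightarrow X$, whereas indecomposability of $X$ enters earlier, to ensure that $\partial_U\theta$ is generically nonzero on $\Theta$ so that the local pole analysis applies on a dense set.
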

Under the  assumption that the vector $U$ \emph{spans an elliptic curve} in $X$,
Theorem~\ref{th1.3} was proved in \cite{krwz}, where the connection of the
elliptic solutions of BDHE and, the so-called, elliptic nested Bethe nsatz equations
was established. The condition $(C)$ is its abelian generalization.

\section{The problem of characterization of Prym varieties}

An involution $\sigma: \G\to \G$ on a smooth algebraic curve $\G $ naturally determines
an involution $\s^*: J(\G)\longmapsto J(\G)$ on its Jacobian. The odd subspace with respect
to this involution is a sum of an Abelian subvariety of lower dimension, called the
Prym variety, and a finite group. The restriction of the principal polarization of
the Jacobian determines a polarization of the Prym variety which is principal if
and only if the original involution of the curve has at most two fixed points.
The problem of characterizing the locus $\P_g$ of Prym varieties of dimension $g$
in the space $\A_g$ of all principally polarized Abelian varieties is well known and
during its history has attracted considerable interest. This
problem is much harder than the Riemann-Schottky problem and until relatively
recently its solution in terms of a finite system of equations was completely open.

The problem of characterizing Prym varieties in the case of curves with an involution
having two fixed points was solved in \cite{kr-prym} in terms of the Schr\"odinger operators
integrable with respect to one energy level. The theory of such operators was
developed by Novikov and Veselov in \cite{nv1,nv2}, where the authors also introduced the
corresponding non-linear equation, the so-called Novikov-Veselov equation.
Curves with an involution having a pair of fixed points can be regarded as a limit
of unramified covers. A characterization of the Prym varieties in the latter case in
terms of the existence of quadrisecants was obtained  the
author and Grushevsky in (\cite{kr-quad}).

The existence of families of quadrisecants for curves with an involution having at
most two fixed points was proved in \cite{bd,fay2}. An analogue of Gunning's theorem
asserting that the existence of a family of secants characterizes Prym varieties
was proved by Debarre \cite{deb}. We note that the existence of one quadrisecant does
not characterize Prym varieties. A counterexample to the naive generalization of
Welters' conjecture was constructed by Beauville and Debarre in the work \cite{bd}.

It was proved in (\cite{kr-quad}) that the existence of a symmetric pair of quadrisecants is
a characteristic property for Prym varieties of unramified covers.
\begin{theorem} [Geometric characterization of Prym varieties.] An indecomposable
principally polarized Abelian variety $(X,\theta)\in\A_g$ is in the closure of the locus of
Prym varieties of smooth unramified double covers if and only if there exist four
distinct points $p_1,p_2,p_3,p_4\in X$, none of them of order two, such that the images of
the Kummer map of the eight points $p_1\pm p_2\pm p_3\pm p_4$ lie on two quadrisecants (the
corresponding quadruples of points are determined by the number of plus signs ).
\end{theorem}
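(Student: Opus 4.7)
The plan is to follow the same three-step template that succeeds in Theorems~\ref{th1.1}--\ref{th1.3}: first translate the geometric configuration of quadrisecants into an analytic identity for theta functions; then invoke the generating property to propagate this single identity into a full family (equivalently, into a meromorphic solution of the relevant auxiliary linear problem with an involution symmetry); and finally reconstruct the algebraic-geometric data, that is, the unramified double cover whose Prym is $(X,\theta)$. The forward direction (a Prym of an unramified cover produces the two symmetric quadrisecants) is a classical consequence of Fay-type identities together with the results of \cite{bd,fay2}; the content of the theorem lies in the converse.

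Step 1 (translation). By the addition formula \eqref{ThetaQuad}, the Kummer images of four points $a_1,\dots,a_4\in X$ are collinear if and only if there exist nonzero constants $c_i$ such that $\sum_{i=1}^{4}c_i\,\theta(z+a_i)\theta(z-a_i)\equiv 0$ on $X$. The eight points $p_1\pm p_2\pm p_3\pm p_4$ split into two quadruples according to the parity of the number of plus signs, and the assumed pair of quadrisecants gives two such bilinear theta identities. Introducing shift vectors $U,V,W$ built from the $p_i$ (for instance $U=p_2{-}p_3$, $V=p_3{-}p_4$) and setting $\tau_n(x,m):=\theta(xU+nV+mW+Z)$, the two identities rearrange to a pair of BDHE-type relations \eqref{BDHE} valid on the theta divisor, linked by a discrete symmetry $n\mapsto -n$ that is the shadow of the would-be involution $\sigma$ of the spectral cover.

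Step 2 (generating the family). The crucial step is to upgrade these two identities, which hold only on $\Theta$, into a full solution of the auxiliary linear problem \eqref{intBA} carrying an involution symmetry, i.e., to produce a Baker--Akhiezer-type meromorphic solution $\psi_n(x)$ whose poles are controlled by $\tau_n(x)=\theta(xU+nV+Z)$ along the entire orbit of the shifts, together with its mirror under $\s$. This is the abelian nested Bethe ansatz mechanism behind Theorem~\ref{th1.3}, but now run in tandem on the two quadrisecant identities so that the symmetry $n\mapsto -n$ is preserved at every iteration. Indecomposability of $(X,\theta)$ together with a careful analysis of how the zeros of $\tau$ on $\Theta$ evolve under the shift is used to exclude degenerate branches and to glue the two propagations into a single involution-equivariant family of difference operators.

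Step 3 (reconstruction). From the resulting commutative ring of involution-equivariant difference operators one extracts, by the Burchnall--Chaundy--Krichever construction, a smooth projective spectral curve $\wt\G$ with a fixed-point-free involution $\sigma$, together with a morphism identifying $X$ with the antiinvariant Prym variety $P(\wt\G/\G)$, $\G:=\wt\G/\sigma$. The fact that $\sigma$ acts without fixed points (so that the cover is unramified) is forced by the symmetric nature of the pair of quadrisecants rather than by a single one; a single quadrisecant corresponds to the ramified case already excluded by the Beauville--Debarre counterexample in \cite{bd}.

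The main obstacle will be Step 2. Since \cite{bd} shows that a single quadrisecant is insufficient, the propagation must genuinely exploit \emph{both} quadrisecants at once, producing one involution-equivariant family rather than two unrelated families that only happen to intertwine on $\Theta$. The compatibility of the two BDHE propagations along the whole shift orbit, and the exclusion of singular degenerations of $\wt\G$ that would destroy either smoothness of the cover or the free action of $\sigma$, are the substantive technical points; these are where indecomposability of the polarization and the structure of the abelian nested Bethe ansatz equations enter in an essential way.
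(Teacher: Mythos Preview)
Your three-step template is exactly the one the paper follows, and your reading of the forward direction and of the role of the Beauville--Debarre counterexample is accurate. The gap is in the choice of auxiliary linear problem in Step~2.

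You propose to run the BDHE mechanism of Theorem~\ref{th1.3} ``in tandem'' with an imposed discrete symmetry $n\mapsto -n$. The paper takes a different route, and in fact explicitly flags why: prior to \cite{kr-quad} there was \emph{no} nonlinear integrable equation whose algebro-geometric solutions are naturally associated to unramified double covers, so the authors had to construct one. The relevant auxiliary linear problem is not \eqref{intBA} with a symmetry bolted on, but the discrete potential Schr\"odinger equation \eqref{laxdd0},
\[
\psi_{n+1,m+1}-u_{n,m}(\psi_{n+1,m}-\psi_{n,m+1})-\psi_{n,m}=0,
\]
a four-term relation in two discrete variables whose associated nonlinear equation is a discrete analogue of the Novikov--Veselov equation. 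The corresponding condition~$(C)$ on the theta divisor is the pair of cubic theta relations \eqref{cm7dd} in three shift vectors $U,V,W$, with the two equations distinguished by the sign of $W$; this is the analytic incarnation of the \emph{pair} of symmetric quadrisecants, and it is structurally different from two copies of \eqref{cm7d} linked by $n\mapsto -n$.

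Concretely, what your proposal is missing is the observation that the eight points $p_1\pm p_2\pm p_3\pm p_4$ do not organize themselves into BDHE data for a single curve with an involution exchanging two marked points; they organize into data for \eqref{laxdd0}, whose spectral theory produces a curve $\wt\G$ with a fixed-point-free involution directly. Trying to force the BDHE/nested-Bethe-ansatz propagation to respect $n\mapsto -n$ at every step is precisely the ``substantive technical point'' you defer, and the paper's experience suggests it does not close: one needs the new hierarchy. Your Step~3 is then also modified accordingly --- the commutative ring one extracts is a ring of difference operators attached to \eqref{laxdd0}, not to \eqref{intBA}.
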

We should note that the proof of this statement required constructing and developing
the theory of a new integrable equation because before that, in contrast with
all other cases, no non-linear equations whose algebro-geometric solutions are associated
to unramified double covers were known.

The auxiliary linear equation of the corresponding analogue of the Novikov-
Veselov equation is a discrete analogue of the potential Schr\"o\-dinger equation considered first in \cite{grin}. It
has the form
\begin{equation}\label{laxdd0}
\psi_{n+1,m+1}-u_{n,m}(\psi_{n+1,m}-\psi_{n,m+1})-\psi_{n,m}=0
\end{equation}
The analog of the condition $(C)$ in the previous theorem which is also can be thought as the abelian generalization of some discrete time integrable system (which has not been studied so far) is as follows:

$(C)$ \emph{There are constants $c_i^\pm, i=1,2,3$ such that two equations (one for the top choice of signs everywhere,
and one --- for the bottom)
\begin{eqnarray}\label{cm7dd}
c_1^{\mp 2}c_3^2\ \theta(Z+U-V)\,\theta(Z-U\pm W)\,\,\theta(Z+V\pm W)&\nonumber\\
+c_2^{\mp 2}c_3^2\ \theta(Z-U+V)\,\theta(Z+U\pm W)\,\,\theta(Z-V\pm W)\nonumber\\
=c_1^{\mp 2}c_2^{\mp 2}\,\theta(Z-U-V)\,\theta(Z+U\pm W)\,\,\theta(Z+V\pm W)&\nonumber\\
+\theta(Z+U+V)\,\theta(Z-U\pm W)\,\,\theta(Z-V\pm W)& \end{eqnarray}
 are valid on the theta divisor} $\{Z\in X: \theta(Z)=0\}$.
s
\section{Abelian solutions of the soliton equations}

The general concept of \emph{abelian solutions} of soliton equations was introduced by T.~Shiota and the author in \cite{kr-shio,kr-shio1}. It provides a unifying framework for the theory of the elliptic solutions of these equations and algebraic-geometrical solutions of rank 1 expressible in terms of Riemann (or Prym) theta-function.
A solution $u(x,y,t)$
of the KP equation is called \emph{abelian\/} if it is of the form
\beq\label{ushio}
u=-2\p_x^2\ln \tau(Ux+z,y,t)\,,
\eeq
where $x$, $y$, $t\in\mathbb C$ and $z\in \mathbb C^n$ are independent variables,
$0\ne U\in\mathbb C^n$, and for all $y$, $t$ the function
$\tau(\cdot,y,t)$ is a holomorphic section of a line bundle $\L=\L(y,t)$ on an
abelian variety $X=\bC^n/\Lambda$, i.e., for all $\l\in\Lambda$
it satisfies the monodromy relations \eqref{tau1}.

In the case of sections of the canonical line bundle on a principally polarized
Abelian variety the corresponding theta-function is unique up to normalization.
Hence the ansatz \eqref{ushio} takes the form $u=-2\p_x^2\ln\theta(Ux+Z(y,t)+z)$. Since
flows commute with each other, the dependence of the vector $Z(y, t)$ must be linear:
\beq\label{uan1}
u=-2\p_x^2 \ln\theta(Ux+Vy+Wt+z)\,.
\eeq
Therefore, the problem of classification of such Abelian solutions is the same problem
as posed by Novikov.

In the case of one-dimensional Abelian varieties the problem of classification of
Abelian solutions is the problem of classification of the elliptic solutions.
The theory of elliptic solutions of the KP equation goes back to the remarkable
work \cite{amkm}, where it was found that the dynamics of poles of the elliptic (rational
or trigonometric) solutions of the Korteweg-de Vries equation can be described
in terms of the elliptic (rational or trigonometric) Calogero-Moser (CM) system
with certain constraints. It was observed in \cite{kr3} that, when the constraints are
removed, this restricted correspondence becomes an isomorphism when the elliptic
solutions of the KP equation are considered.
The elliptic solutions of the KP equation are  distinguished amongst the general algebraic-geometric solutions
by the condition that the corresponding vector $U$ spans an elliptic curve embedded into the Jacobian of the curve.
Note that, for any vector $U$, the closure of the group $\{Ux|\,x\in \mathbb C,\}$ is an Abelian
subvariety $X\subset J(\G)$. So when this closure does not coincide with the whole
Jacobian, we get non-trivial examples of Abelian solutions. Briefly, the main result
on the classification of Abelian solutions of KP obtained in \cite{kr-shio} can be formulated
as the statement that all the Abelian solutions are obtained in this manner.
To avoid some technical complications we give the formulation of the corresponding
theorem in the situation of general position.

\begin{theorem} Let $u(x,y,t)$ be an abelian solution of the KP such that the group $\bC U\bmod\Lambda$ is dense in $X$. Then there exists a unique algebraic curve $\G$ with smooth marked point
$P\in\G$, holomorphic imbedding
$j_0\colon X\to J(\G)$ and a torsion-free rank 1 sheaf $\mathcal F\in\overline{{Pic}^{g-1}}(\G)$
where $g=g(\G)$ is the arithmetic genus of $\G$, such that setting with the notation
$j(z)=j_0(z)\otimes\mathcal F$
\beq\label{is1}
\tau(Ux+z,y,t)=\rho(z,y,t)\,\widehat\tau(x,y,t,0,\ldots\mid\G,P,j(z))
\eeq
where $\widehat\tau(t_1,t_2,t_3,\ldots \mid \G,P,\mathcal F)$ is the KP $\tau$-function
corresponding to the data $(\G,P,\mathcal F)$, and  $\rho(z,y,t)\not\equiv0$
 satisfies the condition $\p_U\rho=0$.
\end{theorem}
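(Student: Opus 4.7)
The plan is to adapt the Krichever--Shiota method of reconstructing a spectral curve from the analytic data of a $\tau$-function to the abelian setting specified by the hypothesis. The strategy splits into three main stages: produce a wave function parameterized by $z\in X$, extract a spectral curve with marked point, and verify the tau-function identity \eqref{is1}.

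First I would use that $u(x,y,t)$ satisfies the KP equation to produce a formal wave function $\psi(x,y,t;k)=e^{kx+k^2y+k^3t}\bigl(1+\sum_{s\ge 1}\xi_s(x,y,t)k^{-s}\bigr)$ satisfying the auxiliary linear problems of the first few KP flows, with the coefficients $\xi_s$ determined recursively from $u$. Writing $\psi=\wh\Phi(Ux+z,y,t;k)\,e^{kx+k^2y+k^3t}$ transfers the recursion into one for $\wh\Phi_s(z,y,t)$ whose input is $\tau$, and upgrades the wave function into an object that is naturally a function of $z\in\bC^n$.

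The second and most delicate stage is to show that each $\wh\Phi_s$ is a meromorphic function on the abelian variety $X$, with pole divisor contained in a fixed translate of the zero locus of $\tau$. This is the abelian analog of the local construction of meromorphic wave functions in the proofs of the three cases of Welters' conjecture, and it uses the abelian CM relation obtained by specializing condition $(C)$ of Theorem~\ref{th1.1} to $\tau(Ux+z,y,t)$. At each recursive step one must invert a first-order operator in the direction $U$, and the abelian CM equation \eqref{cm70} is precisely the integrability condition ensuring that the obstruction vanishes on the theta divisor. The density hypothesis that $\bC U\bmod\Lambda$ is dense in $X$ is then used to promote the meromorphicity known along the orbit $\bC U$ to meromorphicity on all of $X$, with quasi-periodicity prescribed by \eqref{taumon}.

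Having constructed this meromorphic family, the spectral curve is produced by the classical Krichever scheme. For generic $z$, the wave function $\psi(x,y,t;k;z)$ is a common eigenfunction of the commutative ring $\A_z$ of ordinary differential operators in $x$ commuting with the KP Lax pair; by the Burchnall--Chaundy--Krichever correspondence, $\mathrm{Spec}\,\A_z$ is an algebraic curve $\G_z$ with a smooth marked point $P_z$ at infinity. Continuous dependence on $z$ combined with the density assumption forces $(\G_z,P_z)$ to be independent of $z$, giving the universal $(\G,P)$. The pole divisor of $\psi(\cdot;z)$ on $\G$ yields the map $j_0\colon X\to J(\G)$ and a base sheaf $\F$. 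The identity \eqref{is1} then follows from a standard comparison: the Baker--Akhiezer function constructed from $(\G,P,j(z))$ and the wave function constructed above satisfy the same linear problem with matching singular part at $P$ and the same pole divisor on $\G$, hence differ by a factor $\rho(z,y,t)$ independent of $x$; uniqueness of $(\G,P,\F)$ is part of the Krichever inverse spectral construction. The principal obstacle is the second stage: the cohomological fact that the recursion for $\wh\Phi_s$ is solvable globally on $X$ at every step is what forces the abelian $\tau$ to come from an algebraic curve, and proving this requires simultaneously exploiting the abelian CM relation and the density of $\bC U\bmod\Lambda$, in analogy with the core technical step in the proofs of the trisecant conjecture.
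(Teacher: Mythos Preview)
Your outline is the same route the paper takes (Section~\ref{s:proofs}, which treats Theorem~\ref{th1.1} but serves as the template for the abelian-solutions result of \cite{kr-shio}): build $\lambda$-periodic wave solutions, pass to a pseudodifferential operator $\L$ with globally defined abelian coefficients, extract a ring of commuting ordinary differential operators, and invoke the Burchnall--Chaundy--Krichever correspondence. Two steps need sharpening.

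First, you assert that $\psi$ is a common eigenfunction of a commutative ring $\A_z$ of ordinary differential operators, but the nontriviality of $\A_z$ is precisely the hard point and is not addressed in your sketch. The paper's mechanism is the lemma that the residues $F_m=\res_\p\L^m$ are abelian functions on $X$ with pole of order at most two on the divisor $\Theta$. Since such functions span a finite-dimensional space, for all but finitely many $n$ one obtains linear relations $F_n+\sum_{i<n}c_{i,n}F_i=0$, and it is these relations that produce the commuting differential operators $L_n=\L^n_++\sum c_{i,n}\L^{n-i}_+$. Without this finite-dimensionality argument the ring could a priori be trivial and no spectral curve would emerge.

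Second, the $z$-independence of $(\G,P)$ is not obtained by ``continuous dependence plus density.'' In the paper the coefficients $w_s$ of $\L$ are shown to be global meromorphic functions on $X$ (they are insensitive to the $\p_U$-invariant ambiguity in the choice of $\psi$, and Hartogs extends them across the singular locus); hence the $F_m$ and the linear relations among them are global abelian data, and the spectral curve is fixed from the outset rather than merely locally constant. The density hypothesis enters differently: it forces the Zariski closure of $\bC U\bmod\Lambda$ to be all of $X$, so the $\lambda$-periodic normalization of Lemma~\ref{lem6.3} already yields coefficients meromorphic on all of $X$, and it guarantees that holomorphic $\p_U$-invariant quantities such as the eigenvalue series $a_n(Z,k)$ are constant.
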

 Note that if $\G$ is smooth then:
\beq\label{is2}
\widehat\tau(x,t_2,t_3,\dots\mid\G,P,j(z))=
\theta\Bigl(Ux+\sum V_it_i+j(z)\Bigm|B(\G)\Bigr)\,
e^{Q(x,t_2,t_3,\ldots)}\,,
\eeq
where $V_i\in\mathbb C^n$, $Q$ is a quadratic form, and $B(\G)$ is the
period matrix of $\G$.
A linearization on $J(\G)$ of the nonlinear $(y,t)$-dynamics for  $\tau(z,y,t)$ indicates the possibility of the existence of integrable systems on spaces of theta-functions of higher level. A CM system is an example of such a system for $n=1$.

\section{The Baker-Akhiezer functions -- General scheme}

The "only if" part of all the theorems above is a corollary of the general algebraic-geometric construction
of solutions of soliton equations based on a concept of the Baker-Akhiezer function.

Let $\Gamma$ be a nonsingular algebraic curve of genus $g$ with $N$ marked points
$P_{\a}$ and fixed local parameters $k_{\a}^{-1}(p)$ in  neighborhoods
of the marked points. The basic scalar \emph{multi-point\/} and \emph{multi-variable} Baker-Akhiezer function $\psi(t,p)$ is a function of external parameters
\beq\label{times}
t=(t_{\a,i}),\ \a = 1,\ldots, N ; \ i=0,\ldots ;\ \  \sum_\a t_{\a,0}=0,
\eeq
only finite number of which is non-zero, and a point $p\in \G$. For each set of the external parameters $t$ it is defined by its analytic properties on $\G$.

\emph{Remark.} For the simplicity we will begin with the assumption that the variables $t_{\a,0}$ are integers, i.e.,
$t_{\a,0}\in \mathbb Z$.

\begin{lemma} For any set of $g$ points  $\gamma_1,\ldots,\gamma_g$ in a
general position there exists a unique (up to constant factor
$c(t)$) function $\psi (t,p)$,
such that:

(i) the function $\psi$ (as a function of the variable $p\in \G$) is meromorphic everywhere except for the points $P_{\a}$ and
has at most simple poles at the points $\gamma_1,\ldots,\gamma_g$ ( if all
of them are distinct);

(ii) in a neighborhood of the point $P_{\a}$ the function $\psi$ has the
form
\beq
\psi (t,p) =k_\a^{t_{\a,0}} \exp \biggl(\sum_{i=1}^{\infty} t_{\a ,i} k_{\a}^{i}
\biggr) \biggl( \sum_{s=0}^{\infty} \xi_{\a,s}(t) k_{\a}^{-s} \biggr), \quad k_\a=k_\a(p)
\label{2.1}
\eeq
\end{lemma}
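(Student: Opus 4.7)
The plan is to follow the classical Krichever construction: build $\psi$ explicitly via a theta-function formula, then derive uniqueness through a Riemann--Roch count applied to the zero divisor of any candidate solution.

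\emph{Existence.} I would first introduce the standard normalized meromorphic differentials on $\Gamma$: for each $(\alpha,i)$ with $i\ge 1$, the second-kind differential $d\Omega_{\alpha,i}$, holomorphic off $P_\alpha$ with principal part $d(k_\alpha^i)$ there and vanishing $a$-periods; and, using $\sum_\alpha t_{\alpha,0}=0$, the third-kind differential $d\Omega_0$ with residues $t_{\alpha,0}$ at $P_\alpha$ and vanishing $a$-periods. Setting
$$
d\Omega(t) = d\Omega_0 + \sum_{\alpha,\,i\ge 1} t_{\alpha,i}\, d\Omega_{\alpha,i},
$$
let $2\pi i\, U(t)$ denote its vector of $b$-periods. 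With $A\colon \Gamma \to J(\Gamma)$ the Abel map, $K$ the vector of Riemann constants, and $Z_0 = -A(\gamma_1+\cdots+\gamma_g) - K$, I would define
$$
\psi(t,p) = \frac{\theta\bigl(A(p) + Z_0 + U(t)\bigr)}{\theta\bigl(A(p) + Z_0\bigr)}\, \exp\int_{p_0}^p d\Omega(t).
$$
Verification of (i), (ii) then amounts to three routine checks: single-valuedness on $\Gamma$ (the multiplicative $b$-cycle monodromy of the theta quotient, from Riemann quasi-periodicity, cancels that of the exponential precisely because the shift in the numerator is $U(t)$); simple poles exactly at the $\gamma_i$ for $\gamma$ in general position (Riemann vanishing applied to the denominator); and the asymptotic form \eqref{2.1} at each $P_\alpha$ (from the local expansion $\int^p d\Omega_{\alpha,i} = k_\alpha^i/i + \mathrm{const} + O(k_\alpha^{-1})$ together with the logarithmic contribution of $d\Omega_0$ producing the factor $k_\alpha^{t_{\alpha,0}}$).

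\emph{Uniqueness.} Suppose $\psi_1, \psi_2$ both satisfy (i) and (ii). Since they share the same exponential factor and the same leading power $k_\alpha^{t_{\alpha,0}}$ at every $P_\alpha$, the ratio $r = \psi_1/\psi_2$ has no essential singularity at any marked point: locally $r = \xi_{\alpha,0}^{(1)}/\xi_{\alpha,0}^{(2)} + O(k_\alpha^{-1})$. Hence $r$ extends to a meromorphic function on all of $\Gamma$. Applying the residue theorem to $d\ln\psi_2$---residue $-1$ at each $\gamma_j$, residue $-t_{\alpha,0}$ at each $P_\alpha$, positive integer residues at zeros of $\psi_2$---and using $\sum_\alpha t_{\alpha,0}=0$, I conclude that $\psi_2$ has exactly $g$ zeros on $\Gamma\setminus\{P_\alpha\}$, forming a divisor $D_2$ of degree $g$. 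Thus $r \in \mathcal{L}(D_2)$; when $D_2$ is non-special, Riemann--Roch gives $h^0(\mathcal{O}(D_2))=1$, so $r$ must be a constant $c(t)$, yielding $\psi_1 = c(t)\,\psi_2$.

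\emph{Main obstacle.} The substantive technical point is the genericity clause: Riemann--Roch forces $r$ to be constant only when the zero divisor $D_2$ (not just the pole divisor $\gamma_1+\cdots+\gamma_g$) is non-special. I would resolve this by using the existence construction as a witness: the explicitly built $\psi$ has zero divisor cut out by $\theta(A(p)+Z_0+U(t))$, which by Riemann vanishing is non-special for $\gamma$ in an open dense subset of $\Gamma^{(g)}$. Semicontinuity then transports non-speciality of $D_2$ to every BA function associated with $\gamma$ in that same open locus, which is precisely the meaning of "general position" in the statement.
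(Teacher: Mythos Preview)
The paper states this lemma without proof, treating it as a standard result from the Baker--Akhiezer theory; the explicit theta-functional formula you use for existence is exactly what the paper records later as equation (2.101), also without derivation. Your argument is the classical Krichever construction and is essentially correct.

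One point deserves tightening. In your ``main obstacle'' paragraph you invoke semicontinuity to transport non-speciality of the zero divisor to \emph{every} candidate $\psi_2$. This step is both unnecessary and not well-formed: semicontinuity compares divisors in a family, but different BA functions for the \emph{same} $\gamma$ are not a priori linked by any deformation. The fix is simpler than what you wrote. You already observed that the explicitly constructed $\psi$ has zero divisor cut out by the numerator $\theta(A(p)+Z_0+U(t))$, which is non-special for $\gamma$ in general position. So take $\psi_2$ to be that explicit $\psi$. Because the denominator $\theta(A(p)+Z_0)$ vanishes exactly at the $\gamma_j$ (again by Riemann vanishing, for generic $\gamma$), this $\psi_2$ has honest simple poles at every $\gamma_j$; hence the ratio $r=\psi_1/\psi_2$ acquires no poles from the $\gamma_j$ and lies in $\mathcal L(D_0)$ with $D_0$ the non-special degree-$g$ zero divisor of the explicit $\psi$. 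Riemann--Roch then gives $r=c(t)$. No statement about the zero divisor of an arbitrary $\psi_1$ is needed.

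A minor slip: with your normalization $d\Omega_{\alpha,i}=d(k_\alpha^i)+\cdots$, the local integral is $k_\alpha^i+\mathrm{const}+O(k_\alpha^{-1})$, not $k_\alpha^i/i$.
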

 From the uniqueness of the Baker-Akhiezer function it follows that:

\begin{theorem}\label{th2.1}
For each pair
$(\a,\,n>0)$ there exists a unique operator $L_{\a ,n}$ of the form
\beq
L _{\a ,n} = \p _{\a,1}^{n}
+ \sum_{j=0}^{n-1} u_{j}^{(\a ,n)}(t) \p_{\a ,1}^{j},
\label{2.2}
\eeq
(where $ \p _{\a,n} =\p / \p t _{\a ,n}$)
such that
\beq
\left(\p_{\a,n} - L_{\a,n}\right)\, \psi (t,p) = 0 .
\label{2.3}
\eeq
\end{theorem}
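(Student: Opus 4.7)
The plan is to construct $L_{\alpha,n}$ by matching, order by order, the singular parts of $\partial_{\alpha,n}\psi$ at $P_\alpha$, and then to invoke the uniqueness statement of the preceding lemma to conclude that $(\partial_{\alpha,n}-L_{\alpha,n})\psi$ vanishes identically.

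First I would examine the behavior of $\psi$ and its $\partial_{\alpha,1}$-derivatives near $P_\alpha$. The exponential factor $\exp(\sum_i t_{\alpha,i}k_\alpha^i)$ in \eqref{2.1} gives $\partial_{\alpha,1}\psi=k_\alpha\psi+(\text{lower order in }k_\alpha)$, so $\partial_{\alpha,1}^{j}\psi$ has leading term $\xi_{\alpha,0}(t)\,k_\alpha^{t_{\alpha,0}+j}\exp(\sum_i t_{\alpha,i}k_\alpha^i)$ followed by lower $k_\alpha$-powers, while $\partial_{\alpha,n}\psi$ has leading term $\xi_{\alpha,0}(t)\,k_\alpha^{t_{\alpha,0}+n}\exp(\ldots)$. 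In general position the coefficient $\xi_{\alpha,0}(t)$ is nonzero. I would then determine $u_{n-1}^{(\alpha,n)}(t),\dots,u_{0}^{(\alpha,n)}(t)$ successively so that the coefficient of $k_\alpha^{t_{\alpha,0}+j}\exp(\ldots)$ in $(\partial_{\alpha,n}-L_{\alpha,n})\psi$ vanishes for each $j=n-1,n-2,\dots,0$. The resulting linear system is triangular --- at step $j$ the coefficient of $u_j$ is the nonzero factor $\xi_{\alpha,0}(t)$, while the remaining terms involve only $u_k$ with $k>j$, already fixed --- so it uniquely fixes $L_{\alpha,n}$.

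Next I would study $\phi:=(\partial_{\alpha,n}-L_{\alpha,n})\psi$. Because $\partial_{\alpha,n}$ and $L_{\alpha,n}$ act only on the external parameters, $\phi$ is meromorphic on $\Gamma\setminus\{P_1,\dots,P_N\}$ with at most simple poles at $\gamma_1,\dots,\gamma_g$. At each $P_\beta$ with $\beta\neq\alpha$, the prefactor $k_\beta^{t_{\beta,0}}\exp(\sum_i t_{\beta,i}k_\beta^i)$ is independent of $t_{\alpha,*}$, so $\phi$ inherits exactly the same exponential-type asymptotic form as $\psi$ at $P_\beta$, with each coefficient $\xi_{\beta,s}$ merely replaced by $(\partial_{\alpha,n}-L_{\alpha,n})\xi_{\beta,s}$. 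At $P_\alpha$, by the construction above, the coefficients of $k_\alpha^{t_{\alpha,0}+j}\exp(\ldots)$ for all $j\geq 0$ vanish, so the expansion of $\phi$ begins no earlier than $k_\alpha^{t_{\alpha,0}-1}\exp(\ldots)$.

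The hard part will be to exploit this mismatch to conclude $\phi\equiv 0$. Applied to $\phi$, the preceding lemma gives $\phi=c(t)\psi$ for some scalar $c(t)$. Equating the coefficients of $k_\alpha^{t_{\alpha,0}}\exp(\sum_i t_{\alpha,i}k_\alpha^i)$ on the two sides --- namely $\xi_{\alpha,0}(t)\neq 0$ for $\psi$ versus $0$ for $\phi$ --- forces $c(t)\equiv 0$, so $\phi=0$ and hence $(\partial_{\alpha,n}-L_{\alpha,n})\psi=0$. Uniqueness of $L_{\alpha,n}$ among operators of the stated form is then automatic: any other such $L'$ would give $(L-L')\psi=0$ with $L-L'$ of order $\leq n-1$ in $\partial_{\alpha,1}$, and since the leading $k_\alpha$-exponents $t_{\alpha,0}+j$ of the $\partial_{\alpha,1}^{j}\psi$ at $P_\alpha$ are pairwise distinct, no nontrivial linear combination of them can annihilate $\psi$.
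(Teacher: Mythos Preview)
Your proposal is correct and follows essentially the same route as the paper: construct $L_{\alpha,n}$ so that $(\partial_{\alpha,n}-L_{\alpha,n})\psi$ has an expansion at $P_\alpha$ starting at order $k_\alpha^{t_{\alpha,0}-1}\exp(\ldots)$, observe that this function still satisfies all the defining analytic properties of the Baker--Akhiezer function, and invoke the uniqueness clause of the preceding lemma to conclude it must vanish. Your added remarks on the behavior at the other punctures $P_\beta$ and on the uniqueness of $L_{\alpha,n}$ simply make explicit what the paper leaves implicit.
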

The idea of the proof of the theorems of this type proposed
in \cite{kr1}, \cite{kr2} is universal.

For any formal series of the form (\ref{2.1}) their exists a unique operator
$L_{\a ,n} $ of the form (\ref{2.2}) such that
\beq
\left(\p_{\a ,n} - L_{\a ,n} \right)\, \psi (t,p) = O(k_\a^{-1})
\exp \,\biggl(\sum_{i=1}^{\infty} t_{\a ,i} k_{\a}^{i} \biggr) . \label{2.4}
\eeq
The coefficients of $L_{\a ,n} $ are universal differential polynomials with
respect to $\xi_{s,\a }$. They can be found after substitution of the
series (\ref{2.1}) into (\ref{2.4}).

It turns out that if the series (\ref{2.1}) is not formal but is an
expansion of the Baker-Akhiezer function in the neighborhood of $P_{\a}$ the
\emph{congruence (\ref{2.4}) becomes an equality}. Indeed, let us consider the
function $\psi_{1}$
\beq
\psi_{1} = (\p_{\a ,n} - L_{\a ,n}) \psi (t,p).
\label{2.5}
\eeq
It has the same analytic properties as $\psi$ except for the only one.
The expansion of this function in the neighborhood of $P_{\a}$ starts
from $O(k_\a^{-1})$. From the uniqueness of the Baker-Akhiezer function it follows that
$\psi_1 = 0 $ and the equality (\ref{2.3}) is proved.

\begin{corollary} The operators $ L_{\a ,n}$ satisfy the compatibility
conditions
\beq
\bigl[ \p_{\a ,n} - L_{\a ,n} ,
\p_{\a ,m} - L_{\a ,m} \bigr] = 0 .\label{2.6}
\eeq
\end{corollary}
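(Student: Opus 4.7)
The plan is to deduce (\ref{2.6}) by reusing the ``uniqueness of the Baker--Akhiezer function'' strategy that underlies the proof of Theorem~\ref{th2.1}. Set $M:=[\p_{\a,n}-L_{\a,n},\,\p_{\a,m}-L_{\a,m}]$; by Theorem~\ref{th2.1} both factors annihilate $\psi$, so applying them successively in either order to $\psi$ gives zero, and hence $M\psi=0$ identically on $\G$ (and for all values of the times $t$). The goal is then to upgrade this to $M=0$ as an operator.

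First I would check that $M$ is actually an ordinary differential operator in $\p_{\a,1}$ alone, with coefficients smooth in $t$. Expanding the commutator gives
\[
M=-\p_{\a,n}(L_{\a,m})+\p_{\a,m}(L_{\a,n})+[L_{\a,n},L_{\a,m}],
\]
where $\p_{\a,n}(L_{\a,m})$ denotes the operator whose coefficients are the $t_{\a,n}$-derivatives of those of $L_{\a,m}$. Since $L_{\a,n}$ and $L_{\a,m}$ are monic of orders $n$ and $m$ in $\p_{\a,1}$, the top-order contributions to $[L_{\a,n},L_{\a,m}]$ cancel, and $M$ is a differential operator in $\p_{\a,1}$ of order at most $n+m-1$.

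The decisive step mirrors the vanishing argument in the proof of Theorem~\ref{th2.1}. Assume for contradiction that $M=\sum_{j=0}^{N}c_j(t)\p_{\a,1}^{\,j}$ with $N\le n+m-1$ and leading coefficient $c_N\not\equiv0$. From the expansion (\ref{2.1}) of $\psi$ at $P_{\a}$ one has
\[
\p_{\a,1}^{\,j}\psi=k_\a^{\,j+t_{\a,0}}\exp\!\Big(\sum_{i}t_{\a,i}k_\a^{\,i}\Big)\bigl(\xi_{\a,0}(t)+O(k_\a^{-1})\bigr),
\]
so the principal term in the Laurent expansion of $M\psi$ near $P_\a$ equals $c_N(t)\,\xi_{\a,0}(t)\,k_\a^{\,N+t_{\a,0}}\exp(\sum_i t_{\a,i}k_\a^{\,i})$. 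Because $M\psi\equiv 0$ on $\G$ and the normalization forces $\xi_{\a,0}\not\equiv 0$, we conclude $c_N\equiv 0$, a contradiction; reading off successive Laurent coefficients at $P_\a$ then yields $M=0$ as an operator.

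I expect the only delicate point to be the justification that the leading coefficient $\xi_{\a,0}(t)$ is not identically zero and that $M$ has smooth coefficients on a dense open subset of parameter space; both facts follow from the genericity of the divisor $\g_1,\dots,\g_g$ assumed in Lemma~2.1 and from the constructive recursion defining $L_{\a,n}$ inside the proof of Theorem~\ref{th2.1}. The rest of the argument is then a direct analog of the ``$\psi_1=0$'' step in that proof, applied now to $M\psi$ in place of $\psi_1$.
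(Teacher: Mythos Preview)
Your argument is correct and is exactly the approach implicit in the paper. The paper states this corollary without proof, but the intended argument is the one you give: $M\psi=0$ for all $p\in\G$, $M$ is an ordinary differential operator in $\p_{\a,1}$ alone, and the expansion (\ref{2.1}) at $P_\a$ then forces each coefficient of $M$ to vanish. The same reasoning is spelled out later in the paper for the analogous commutativity statement $[L_n^Z,L_m^Z]=0$, where it reads: ``$[L_n^Z,L_m^Z]\psi=0$. The commutator is an ordinary differential operator. Hence, the last equation implies (\ref{com1}).'' Your write-up simply makes that one-line argument explicit.
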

\noindent
The equations (\ref{2.6}) are gauge invariant. For any function
$c(t)$ operators
\beq
\wt L_{\a ,n} = c L_{\a ,n} c^{-1} +
( \p_{\a ,n}c) c^{-1} \label{2.7}
\eeq
have the same form (\ref{2.2}) and satisfy the same operator equations
(\ref{2.6}). The gauge transformation (\ref{2.7}) corresponds to the gauge
transformation of the Baker-Akhiezer function
\beq
\wt\psi (t,p) = c(t) \psi (t,p) \label{2.7a}
\eeq
In addition to differential equations (\ref{2.3}) the Baker-Akhiezer function satisfies an infinite system of
differential-difference equations. Recall that the discrete variables $t_{\a,0}$ are subject to the constraint
$\sum_\a t_{\a,0}=0$. Therefore, only the first $(N-1)$ of them are independent and $t_{N,0}=-\sum_{\a=1}^{N-1}t_{\a,0}$. Let us denote by $T_{\a},\ \ \a=1,\ldots,N-1,$ the operator that shifts
the arguments $t_{\a,0}\to t_{\a,0}+1$ and $t_{N,0}\to t_{N,0}-1$, respectively. For the sake of brevity in the formulation of the next theorem we introduce the operator $T_N=T_1^{-1}$.
\begin{theorem}
For each pair
$(\a,\,n>0)$ there exists a unique operator $\wh L_{\a ,n}$ of the form
\beq
\wh L_{\a ,n} = T_{\a}^{n}+ \sum_{j=0}^{n-1} v_{j}^{(\a ,n)}(t) \,T_{\a}^{j},\ \
v_{0}^{(N ,n)}(t)=0.
\label{2.2d}
\eeq
such that
\beq
\left(\p_{\a,n} - \wh L_{\a,n}\right)\, \psi (t,p) = 0 .
\label{2.3d}
\eeq
\end{theorem}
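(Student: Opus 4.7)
The argument follows the universal scheme used for Theorem~\ref{th2.1}, with the shift operators $T_\a$ playing the role of $\p_{\a,1}$. First I would establish the formal analog: for any series of the form \eqref{2.1}, there exists a unique difference operator $\wh L_{\a,n}$ of the prescribed form such that
\begin{equation*}
(\p_{\a,n}-\wh L_{\a,n})\psi(t,p)=O(k_\a^{-1})\exp\!\Bigl(\sum_{i\geq 1} t_{\a,i}k_\a^i\Bigr)k_\a^{t_{\a,0}}
\end{equation*}
near $P_\a$. Since $T_\a$ shifts $t_{\a,0}\mapsto t_{\a,0}+1$ but leaves $t_{\a,i}$ for $i\geq 1$ fixed, the expansion of $T_\a^j\psi$ near $P_\a$ has leading order $k_\a^{j+t_{\a,0}}$ with coefficient $\xi_{\a,0}(T_\a^j t)$, while $\p_{\a,n}\psi$ has leading order $k_\a^{n+t_{\a,0}}$ with coefficient $\xi_{\a,0}(t)$ obtained by differentiating the exponential. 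Matching successively the coefficients of $k_\a^{n-1+t_{\a,0}},k_\a^{n-2+t_{\a,0}},\ldots,k_\a^{t_{\a,0}}$ yields a triangular system determining $v_{n-1}^{(\a,n)},v_{n-2}^{(\a,n)},\ldots,v_0^{(\a,n)}$ recursively as universal expressions in the series coefficients $\xi_{\a,s}(t)$ and their $T_\a$-shifts.

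The second step is to promote this formal congruence to an identity using the global analytic properties of $\psi$. Set $\psi_1:=(\p_{\a,n}-\wh L_{\a,n})\psi$. Then $\psi_1$ is meromorphic on $\G\setminus\{P_1,\ldots,P_N\}$, has at most simple poles at $\g_1,\ldots,\g_g$, and at each marked point $P_\b$ retains the exponential type $\exp(\sum t_{\b,i}k_\b^i)k_\b^{t_{\b,0}}$ present in $\psi$: the operator $\wh L_{\a,n}$ acts purely on the $t$-variables, and $\p_{\a,n}$ does not generate new exponential factors at $P_\b$ for $\b\neq\a$. By the construction of Step~1, the expansion of $\psi_1$ at $P_\a$ begins one order lower than that of $\psi$. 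The uniqueness part of the preceding Lemma, applied to $\psi_1$ with its asymptotic at $P_\a$ sharpened by one order, then forces $\psi_1\equiv 0$. Uniqueness of $\wh L_{\a,n}$ itself follows from the same asymptotic bookkeeping: two operators of the prescribed form satisfying \eqref{2.3d} differ by a lower-order difference operator annihilating $\psi$, whose coefficients vanish order by order. For $\a=N$, where $T_N=T_1^{-1}$, the constant term of $\wh L_{N,n}$ would otherwise embody the gauge freedom associated to the scalar multiplier $c(t)$ permitted by the Lemma; the condition $v_0^{(N,n)}(t)=0$ precisely removes that ambiguity and pins down the operator within its gauge orbit.

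The principal technical obstacle lies in the top-order matching at $P_\a$. In the differential case of Theorem~\ref{th2.1} the monomials $\p_{\a,1}^n\psi$ and $\p_{\a,n}\psi$ share the leading coefficient $\xi_{\a,0}(t)$, so the $k_\a^{n+t_{\a,0}}$ term of $(\p_{\a,n}-L_{\a,n})\psi$ cancels automatically and one is left with exactly $n$ lower-order equations for $n$ unknowns. In the difference setting, $T_\a^n\psi$ contributes instead the shifted coefficient $\xi_{\a,0}(T_\a^n t)$, so the top-order cancellation is not automatic. It must be arranged by selecting the multiplicative normalization $c(t)$ of $\psi$ compatibly with the shift structure --- equivalently, absorbing the $\xi_{\a,0}(t)/\xi_{\a,0}(T_\a^n t)$ discrepancy into $c(t)$ before executing the recursion. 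Verifying that this choice can be made coherently for all $\a$ and that the resulting $n$-term triangular system for the $v_j^{(\a,n)}$ remains nondegenerate under the general-position assumption on $\g_1,\ldots,\g_g$ is the nontrivial step; once done, the remainder of the proof transcribes almost verbatim from Theorem~\ref{th2.1}.
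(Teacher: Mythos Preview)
Your proposal is correct and follows exactly the paper's approach, whose entire proof reads ``The proof is identical to that in the differential case.'' The concern in your third paragraph about top-order matching is real but overstated: for each fixed $\a$ the gauge freedom $\psi\mapsto c(t)\psi$ lets one normalize $\xi_{\a,0}\equiv 1$ (as the paper does in its examples), after which the leading term cancels automatically and the remaining $n$ equations determine $v_{n-1},\dots,v_0$ exactly as in the differential case---no coherence across different values of $\a$ is needed, since the operators $\wh L_{\a,n}$ are constructed independently and a change of gauge merely conjugates all of them as in \eqref{2.7}.
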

The proof is identical to that in the differential case.
\begin{corollary} The operators $\wh L_{\a ,n}$ satisfy the compatibility
conditions
\beq
\bigl[ \p_{\a ,n} - \wh L_{\a ,n} ,
\p_{\a ,m} - \wh L_{\a ,m} \bigr] = 0 .\label{2.6d}
\eeq
\end{corollary}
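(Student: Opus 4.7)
The plan is to mirror the proof of the preceding differential corollary, replacing differential operators in $\p_{\a,1}$ by difference operators in $T_\a$, and to close the argument by appealing to the uniqueness of the Baker-Akhiezer function from the lemma at the beginning of the section. The key point is that a nonzero commutator applied to $\psi$ would produce a function of $p\in\G$ whose asymptotic behavior at $P_\a$ violates that uniqueness.

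First I would form the operator
\[
A := [\p_{\a,n}-\wh L_{\a,n},\,\p_{\a,m}-\wh L_{\a,m}]
= -\p_{\a,n}(\wh L_{\a,m})+\p_{\a,m}(\wh L_{\a,n})+[\wh L_{\a,n},\wh L_{\a,m}],
\]
where $\p_{\a,k}(\wh L_{\a,\ell})$ denotes the continuous derivative acting only on the coefficients $v_j^{(\a,\ell)}(t)$. Since the top symbols $T_\a^n$ and $T_\a^m$ commute, the leading shift $T_\a^{n+m}$ cancels in $[\wh L_{\a,n},\wh L_{\a,m}]$, so $A$ is a pure difference operator $A=\sum_j a_j(t)\,T_\a^j$ with only finitely many nonzero coefficients.

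Next I would apply $A$ to the Baker-Akhiezer function $\psi$. By the preceding theorem, both $(\p_{\a,n}-\wh L_{\a,n})\psi=0$ and $(\p_{\a,m}-\wh L_{\a,m})\psi=0$, so $A\psi\equiv 0$ identically in $p\in\G$. Because $T_\a$ shifts $t_{\a,0}\mapsto t_{\a,0}+1$ and $t_{N,0}\mapsto t_{N,0}-1$, the ansatz (\ref{2.1}) gives in a neighborhood of $P_\a$
\[
T_\a^j\psi(t,p)=k_\a^{t_{\a,0}+j}\exp\!\Bigl(\sum_i t_{\a,i}k_\a^i\Bigr)\bigl(\xi_{\a,0}(T_\a^j t)+O(k_\a^{-1})\bigr),
\]
so the shifts $T_\a^j\psi$ have pairwise distinct leading powers of $k_\a$. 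Equating the coefficient of the highest such power in the identity $A\psi=0$ yields $a_{j_0}(t)\,\xi_{\a,0}(T_\a^{j_0}t)\equiv 0$; since $\xi_{\a,0}$ is nonzero on a dense open subset of the time parameters by the BA normalization, this forces $a_{j_0}\equiv 0$. Iterating (or using the parallel expansion at $P_N$ to control the minimal shift) kills every $a_j$, so $A\equiv 0$ and the compatibility condition (\ref{2.6d}) follows.

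The only obstacle is bookkeeping: one must verify that the shifted Baker-Akhiezer functions $T_\a^j\psi$ genuinely have linearly independent asymptotic expansions at the marked points, so that the vanishing of $A\psi$ propagates to each coefficient $a_j$. This independence is built into the expansion (\ref{2.1}) together with the generic-position assumption on the pole divisor $\gamma_1,\ldots,\gamma_g$; granted this, the proof is a direct transcription of the differential argument.
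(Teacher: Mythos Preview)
Your proposal is correct and matches the paper's implicit argument: the paper states this corollary without proof, treating it (like the earlier differential corollary) as an immediate consequence of the preceding theorem via uniqueness of the Baker--Akhiezer function. Your explicit version---the commutator is a pure difference operator in $T_\a$ that annihilates $\psi$, and the distinct leading powers $k_\a^{t_{\a,0}+j}$ of the shifts $T_\a^j\psi$ at $P_\a$ force every coefficient $a_j$ to vanish---is exactly the intended justification.
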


\subsection*{Theta-functional formulae}
It should be emphasized that the algebro-geometric construction
is not a sort of abstract ``existence'' and ``uniqueness'' theorems. It
provides the explicit formulae for solutions in terms of the Riemann
theta-functions. They are the corollary of the explicit formula for the
Baker-Akhiezer function.

Let $a_i,b_i\in H_1(\G,\bZ)\, i=1\ldots,g, $ be a basis of cycles on $\G$ with the canonical intersection matrix, i.e. $a_i\cdot a_j=b_i\cdot b_j=0, a_i\cdot b_j= \delta_{ij}$ and let $\omega_i$ be the basis of holomorphic differentials on $\G$ normalized by the equations $\oint_{a_j}\omega_j=\delta_{ij}$. The matrix $B$ of their $b$-periods $B_{ij}=\oint_{b_i} \omega_j$ is indecomposable symmetric matrix with positive definite imaginary part. By formula \eqref{teta1} it defines the Riemann theta-function $\theta(z)=\theta(z|B)$.
.

\begin{theorem} The Baker-Akhiezer function is given by the formula
\beq
\psi(t,p)=c(t)\exp\left(\sum t_{\a,i}\Omega_{\a,i}(p)\right)
\frac{\theta(A(p)+\sum U_{\a,i}t_{\a,i}+Z)}{\theta(A(p)+Z)} \label{2.101}
\eeq
Here the sum is taken over all the indices $(\a, i>0)$ and over the indices $(\a,0)$ with $\a=1,\ldots,N-1$, and:

a) $\Omega_{\a,i}(p)$ is the abelian integral,
$
\Omega_{\a,i}(p)=\int^p d\Omega_{\a,i},
$
corresponding to the unique normalized,
$
\oint_{a_k} d\Omega_{\a,i}=0,
$
meromorphic differential on
$\Gamma$, which for $i>0$ has the only pole of the form
$
d\Omega_{\a,i}=d\left(k_{\a}^i+O(1)\right)
$
at the marked point $P_{\a}$ and for $i=0$ has simple poles at the marked point $P_{\a}$ and $P_{N}$ with residues
$\pm 1$, respectively;

b) $2\pi iU_{\a,j}$ is the vector of $b$-periods of the differential
$d\Omega_{\a, j}$, i.e.,
$$
U_{\a,j}^k=\frac 1{2\pi i} \oint_{b_k} d\Omega_{\a,j};
$$

c) $A(p)$ is the Abel transform, i.e., a vector with the coordinates
$
A_i(p)=\int^p d\omega_i
$

d) $Z$ is an arbitrary vector (it corresponds to the divisor of poles of
Baker-Akhiezer function).
\end{theorem}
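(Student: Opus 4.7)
The plan is to verify that the right-hand side of \eqref{2.101} has precisely the analytic properties singled out in the defining lemma of the Baker--Akhiezer function, and then invoke uniqueness. So the only work is to check (a) that the candidate expression is actually a well-defined single-valued function of $p\in\G\setminus\{P_\a\}$, (b) that its divisor of poles outside the $P_\a$ is effective of degree at most $g$ and (generically) equals $\gamma_1+\cdots+\gamma_g$, and (c) that its local expansion at each $P_\a$ has the prescribed form \eqref{2.1}.

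First I would address single-valuedness. Both $A(p)$ and $\Omega_{\a,i}(p)$ are defined only up to their periods, so the expression has to be checked for monodromy around each $a_k$- and $b_k$-cycle. Around $a_k$, by the normalization $\oint_{a_k}\omega_j=\delta_{jk}$ and $\oint_{a_k}d\Omega_{\a,i}=0$, the Abel map shifts by the lattice vector $e_k$ and the $\Omega_{\a,i}$ are invariant, so quasi-periodicity of $\theta$ by $\bZ^g$ shows both the theta-quotient and the exponential are unchanged. Around $b_k$, the Abel map shifts by $B_k$ (the $k$-th column of $B$); the ratio of theta-functions then picks up a factor
\[
\exp\!\Bigl(-2\pi i\sum_{\a,i} U_{\a,i}^{k}\,t_{\a,i}\Bigr),
\]
while the abelian integrals change by $\oint_{b_k}d\Omega_{\a,i}=2\pi i U_{\a,i}^{k}$, so the exponential factor acquires the inverse of the same expression. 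The cancellation is exact. For the discrete variables $t_{\a,0}\in\bZ$, the same check with $d\Omega_{\a,0}$ works, and single-valuedness near $P_N$ is precisely what forces the constraint $\sum_\a t_{\a,0}=0$.

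Next I would analyze poles and marked-point behaviour. Away from the $P_\a$ the exponential is holomorphic, the numerator theta-function is entire in $p$, so the only singularities come from zeros of $\theta(A(p)+Z)$; by Riemann's vanishing theorem this divisor is, for generic $Z$, a non-special effective divisor $\gamma_1+\cdots+\gamma_g$ of degree $g$, producing exactly $g$ simple poles. Near $P_\a$, $d\Omega_{\a,i}$ for $i>0$ has a pole of principal part $d(k_\a^i)$, hence the integral contributes $t_{\a,i}k_\a^i$ in the exponent, giving the factor $\exp(\sum_{i\geq 1} t_{\a,i}k_\a^{i})$; similarly $\Omega_{\a,0}$ contributes $t_{\a,0}\log k_\a$, yielding the factor $k_\a^{t_{\a,0}}$ (well-defined since $t_{\a,0}\in\bZ$). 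The theta-ratio is holomorphic and non-vanishing at $P_\a$ (for generic $Z$), so expanding it in $k_\a^{-1}$ produces the Taylor tail $\sum_{s\geq 0}\xi_{\a,s}(t)k_\a^{-s}$ in \eqref{2.1}. With the normalization $\sum t_{\a,0}=0$, the behavior at $P_N$ is consistent and contributes no extra asymptotic term beyond the one for $\a=N$.

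Having matched properties (i) and (ii) of the defining lemma exactly, uniqueness forces the candidate expression to equal $\psi(t,p)$ up to a factor depending only on $t$, which is absorbed into $c(t)$. I expect the most delicate point to be the bookkeeping of the monodromy cancellation for the discrete variables $t_{\a,0}$, because one must simultaneously track the simple poles of $d\Omega_{\a,0}$ at both $P_\a$ and $P_N$, the integrality constraint on $t_{\a,0}$, and the constraint $\sum_\a t_{\a,0}=0$ that enforces global consistency; everything else is a direct application of Riemann's theta quasi-periodicity and the normalization conventions of the differentials.
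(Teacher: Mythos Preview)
Your proposal is correct and is exactly the canonical argument: verify that the explicit expression in \eqref{2.101} is single-valued on $\G$ (using the $a$- and $b$-cycle monodromies together with the quasi-periodicity of $\theta$ and the definition of $U_{\a,i}$), has the prescribed pole divisor $\g_1+\cdots+\g_g$ (via Riemann's vanishing theorem), and has the required essential singularity and local expansion \eqref{2.1} at each $P_\a$; then invoke the uniqueness part of the defining lemma.

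The paper itself states this theorem without proof---it is a survey and refers to \cite{kr1,kr2} for the construction---so there is no ``paper's own proof'' to compare against; but your outline reproduces precisely the verification argument given in those original references, and there are no gaps in it.
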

Notice, that from the bilinear Riemann relations it follows that the expansion of the Abel transform near the marked point has the form
\beq\label{abelexpans}
A(p)=A(P_{\a})-\sum_{i=1}^\infty \frac{1}{i}U_{\a,i}k_{\a}^{-i}
\eeq

\subsubsection*{Example 1. One-point Baker-Akhiezer function. KP hierarchy}

In the one-point case the Baker-Akhiezer function has an exponential
singularity at a single point $P_1$ and depends on a single set of variables $t_i=t_{1,i}$. Note that in this case there is no discrete variable, $t_{1,0}\equiv 0$.
Let us  choose the normalization of the Baker-Akhiezer function with the
help of the condition $\xi_{1,0}= 1 $, i.e., an expansion of $\psi$ in the neighborhood
of $P_1$ equals
\beq
\psi (t_1,t_2,\ldots,p) =
\exp \biggl(\sum_{i=1}^{\infty} t_{i} k^{i} \biggr)
\biggl(1+ \sum_{s=1}^{\infty} \xi_{s}(t) k^{-s}\biggr).\label{2.8}
\eeq
Under this normalization (gauge) the corresponding operator $L_n$ has the form
\beq
L_n =\p_1^n +\sum_{i=0}^{n-2} u_i^{(n)} \p_1^i . \label{2.9}
\eeq
For example, for $n=2,3$ after redefinition $x=t_1$ we have
\beq \label{kp100}
L_2=\p_x^2-u, \qquad L_3=\p_x^3-\frac32u\p_x-w
\eeq
with $\quad u=2\p_x \xi_1, w=3\p_x\xi_2+3\p^2_x\xi_1-\frac 32 u\xi_1$.

If we define  $y=t_2, t=t_3$, then from \eqref{2.6} with $n=2,m=3$ it follows
$ u(x,y,t,t_4,\ldots )$ satisfies the KP equation \eqref{kp}.

The normalization of the leading coefficient in (\ref{2.8}) defines the the function $c(t)$ in (\ref{2.101}).
That gives the following formula for the normalized one-point Baker-Akhiezer function:
\beq
\psi(t,p)=\exp\left(\sum t_{i}\Omega_{i}(p)\right)
\frac{\theta(A(p)+\sum U_{i}t_{i}+Z) \,\theta(Z)}{\theta(\sum U_{i}t_{i}+Z)\,\theta(A(p)+Z)} , \label{BA1}
\eeq
(shifting $Z$ if needed we may assumed that $A(P_1)=0$).
In order to get the explicit theta-functional form of the solution of the KP
equation it is enough to take the derivative of the first coefficient of the
expansion at the marked point of the ratio of theta-functions in the formula (\ref{BA1}).

Using (\ref{abelexpans}) we get the final formula for the algebro-geometric solutions of the KP hierarchy \cite{kr2}
\beq
u(t_1,t_2, \ldots ) = - 2 \p_1^2 \ln \theta \left(\sum_{i=1}^{\infty} U_i t_i +Z\right) +
\hbox{const}. \label{2.11}
\eeq

\subsubsection*{Example 2. Two-point Baker-Akhiezer function. $2D$ Toda hierarchy}

In the two-point case the Baker-Akhiezer function has exponential
singularities at two points $P_\a, \a=1,2,$ and depends on two sets of continuous variables $t_{\a,i>0}$. In
addition it depends on one discrete variable $n=t_{1,0}=-t_{2,0}$.
Let us  choose the normalization of the Baker-Akhiezer function with the
help of the condition $\xi_{1,0}= 1$.

According to Theorem~\ref{th2.1}, the function $\psi$ satisfies two sets of differential equations. The compatibility
conditions (\ref{2.6}) within the each set can be regarded as two copies of the KP hierarchies. In addition
the two-point Baker-Akhiezer function satisfies differential-difference equations (\ref{2.2d}). The first two of them have the form
\beq\label{nov23}
(\p_{1,1}-T+u)\psi=0,\ \ \ (\p_{2,1}-w T^{-1})\psi=0,
\eeq
where
\beq\label{nov231}
u=(T-1)\xi_{1,1}(n,t) , \ \ \ w=e^{\varphi_n-\varphi_{n-1}},\ \ e^{\varphi_n(t)}=\xi_{2,0}(n,t)
\eeq
The compatibility condition of these equations is equivalent to the $2D$ Toda equation
equation with $\xi=t_{1,1}$ and $\eta=t_{2,1}$. The explicit formula for the solution $\varphi_n(t)$ is a direct corollary of the explicit formula for the Baker-Akhiezer function:
\beq\label{2dsolution}
\varphi_n(t_{\a,i>0})=\ln\frac{\theta((n+1)U+\sum U_{\a,i}t_{\a,i}+Z)}{
\theta(nU+\sum U_{\a,i}t_{\a,i}+Z)}\,, \a=1,2
\eeq

\subsubsection*{Example 3. Three-point Baker-Akhiezer function}

Starting with three-point case, in which the number of discrete variables is $2$, the Baker-Akhiezer function satisfies certain linear difference equations (in addition to the differential and the differential-difference equations (\ref{2.3}), (\ref{2.3d})). The origin of these equations is easy to explain. Indeed, if all the continuous variables vanish, $t_{\a,i>0}=0$, then the Baker-Akhiezer function $\psi_{n,m}:=\psi(n,m,p)$, where $n=-t_{1,0}$, $m=-t_{2,0}$, is a meromorphic function having pole of order $n+m$ at $P_3$ and
zeros of order $n$ and $m$ at $P_1$ and $P_2$ respectively, i.e.,
\beq\label{H0}
\psi_{n,m}\in H^0(D+n(P_3-P_1)+m(P_3-P_2)),\ \   D=\g_1+\cdots+\g_g
\eeq
The functions $\psi_{n+1,m},\psi_{n,m+1},\psi_{n,m}$ are all in the linear space $H^0(D+(n+m+1)P_3-nP_1-mP_2)$.
By Riemann-Roch theorem for a generic $D$ the latter space is $2$-dimensional. Hence, these functions are linear dependent, and they can be normalized such their linear dependence takes the form
\beq\label{laxdd}
\psi_{m,n+1}=\psi_{m+1,n}+u_{m,n}\psi_{m,n}
\eeq
with
\beq\label{uformula}
u_{n,m}= \frac{\tau_{m+1,n+1}\tau_{m,n}}{\tau_{m,n+1}\tau_{m+1,n}}, \quad \tau_{m,n}:=\theta(mU+nV+Z)
\eeq
For the first glance it seems that everything here is within the framework of classical algebraic-geometry. What might be new brought to this subject by the soliton theory is understanding
that \emph{the discrete variables $t_{\a,0}$ can be replaced by continuous ones}. Of course, if
in the formula (\ref{2.101}) the variable $t_{\a,0}$ is not an integer, then $\psi$ is not a single valued function on $\G$. Nevertheless, because the monodromy properties of $\psi$ do not change if the shift of the argument is integer, it satisfied the same type of linear equations with coefficients given by the same type of formulae. It is necessary to emphasize that in such a form the difference equation becomes \emph{functional} equation.

\medskip\noindent
In the four-point case there is three discrete variables $n$, $m$, $l$. In each two of them the Baker-Akhiezer function satisfies
a difference equation. Compatibility of these equations is the BDHE equation

\section{Key idea and steps of the proofs.}\label{s:proofs}

As it was mentioned above the proof of all the particular cases of Welters' trisecant conjecture
uses different hierarchies: the KP, the 2D Toda, and BDHE. In each case there are some specific difficulties but
the main ideas and structures of the proof are the same. As an instructive example we present in this section the idea and key steps of the proof of the first
particular case of Welters' conjecture, namely, the proof of Theorem~\ref{th1.1}.

As it was mentioned above the implication $(A)\to (C)$ is a direct corollary of \eqref{cm50}.
Now we are going to show that \eqref{cm50}, which is satisfied when \eqref{intCM} has \emph{one} meromorphic solution, is sufficient  for the  existence of \emph{one-parametric} family of formal wave solutions below.

 \medskip
The wave solution of (\ref{intCM}) is a solution of the form
\beq\label{ps}
\psi(x,y,k)=e^{kx+(k^2+b)t}\biggl(1+\sum_{s=1}^{\infty}\xi_s(x,t)\,k^{-s}\biggr)\,.
\eeq
\begin{lemma}
Suppose that equations (\ref{cm50}) for the zeros of $\tau(x,t)$ hold. Then there exist
meromorphic wave solutions of equation (\ref{intCM}) that have simple poles at zeros $q$ of $\tau$ and
are holomorphic everywhere else.
\end{lemma}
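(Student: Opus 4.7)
The plan is to build the wave function order by order from the ansatz \eqref{ps}. Writing $\psi=e^{kx+(k^2+b)t}\phi$ with $\phi=1+\sum_{s\ge 1}\xi_s(x,t)k^{-s}$ and substituting into \eqref{intCM}, one finds that $\psi$ is a formal solution iff
\beq\label{wv-rec}
2\p_x\xi_{s+1}=\p_t\xi_s-\p_x^2\xi_s+(u+b)\xi_s,\qquad s\ge 0,\quad \xi_0\equiv 1.
\eeq
With $b=0$, the case $s=0$ integrates to $\xi_1=-\p_x\ln\tau$ up to an additive function of $t$, so $\xi_1$ has simple poles precisely at the zeros of $\tau$ and is holomorphic elsewhere. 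The inductive claim is that this pole structure persists at every order, which yields the wave solution.

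For the inductive step, suppose $\xi_s$ has only simple poles at the zeros $q=q_i(t)$ of $\tau$; one must exhibit a primitive $\xi_{s+1}$ of the right-hand side of \eqref{wv-rec} with the same singularity pattern. Using the expansions $u=2(x-q)^{-2}+v+w(x-q)+\cdots$ from \eqref{ue} and $\xi_s=\alpha_s(x-q)^{-1}+\beta_s+\gamma_s(x-q)+\cdots$, the cubic poles from $-\p_x^2\xi_s$ and from $u\xi_s$ cancel exactly, so the right-hand side of \eqref{wv-rec} has at worst a double pole at $q$, which integrates to a simple pole of $\xi_{s+1}$. The only genuine obstruction is the residue of the right-hand side at each $q_i$: a nonzero residue would produce a logarithm and destroy meromorphy.

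The crux is to show these residues vanish at every order and every $q_i$. A direct local computation expresses each residue as a linear combination of $\dot\alpha_s$, $\alpha_s$, $\gamma_s$ with coefficients built from the Laurent data of $u$. For $s=1$ the explicit formula $\xi_1=-\p_x\ln\tau$ makes the residue collapse to zero algebraically from the identity $u=-2\p_x^2\ln\tau$. Proceeding inductively, at each step one has a free $t$-dependent integration constant in $\xi_s$; this freedom is exactly enough to annul the residue at the next order at every zero simultaneously, provided the resulting system of ODEs for the Laurent data $\{\alpha_s,\beta_s,\gamma_s,\ldots\}$ at the poles is consistent. The hypothesis \eqref{cm50}, $\ddot q_i=2w_i$, is precisely the compatibility relation that closes this system: it enters the recursion at the order where $\ddot q_i$ appears in the residue calculation (already at $s=2$ via the $t$-derivative of the double-pole coefficient at $s=1$), and its vanishing turns the would-be obstruction into an identity.

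The main difficulty I expect is this compatibility verification: one must carefully track how $\ddot q_i$ propagates into the residue at each order, and confirm that \eqref{cm50} makes the obstruction vanish simultaneously at every zero of $\tau$, not merely at a single one. This bookkeeping is delicate because the Laurent data at different poles $q_i$ are coupled through the non-polar parts of $\xi_s$; the key point will be that the relations \eqref{eq1} together with \eqref{cm50} are precisely what is needed to decouple them. Once this is established, the iteration goes through and produces the formal meromorphic wave solution required by the lemma.
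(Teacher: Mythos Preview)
Your setup and identification of the obstruction are correct, but the mechanism you propose for removing it is not the one that works, and as stated it would fail. You write that the free $t$-dependent integration constant in $\xi_s$ is ``exactly enough to annul the residue at the next order at every zero simultaneously.'' But there is only one such constant at each step, while $\tau$ has in general many (in the abelian setting, infinitely many) zeros $q_i$; a single function of $t$ cannot be chosen to kill independent residues at all of them. The worry you raise about coupling between different poles through the non-polar parts of $\xi_s$ is a symptom of this mismatch.

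The actual argument is purely local at each zero and uses no adjustment of constants: the residues vanish \emph{identically} once \eqref{cm50} is assumed. The correct induction hypothesis is not merely that $\xi_s$ has simple poles, but that the right-hand side of the recursion at level $s$ already has zero residue at each $q$, namely
\[
\dot r_s + v\,r_s + 2\,r_{s1}=0,
\]
where $\xi_s = r_s(x-q)^{-1} + r_{s0} + r_{s1}(x-q)+\cdots$. Given this, the recursion determines the Laurent data of $\xi_{s+1}$ at $q$ explicitly:
\[
r_{s+1}=-\dot q\,r_s-2r_{s0},\qquad 2r_{s+1,1}=\dot r_{s0}-r_{s1}+w\,r_s+v\,r_{s0}.
\]
A direct computation then yields
\[
\dot r_{s+1}+v\,r_{s+1}+2\,r_{s+1,1}=-r_s(\ddot q-2w)-\dot q\,(\dot r_s+v\,r_s+2r_{s1}),
\]
which vanishes by \eqref{cm50} together with the induction hypothesis. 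This closes the induction with the integration constants left completely free; they are fixed only later, by the $\lambda$-periodic normalization, not by any residue condition. Hence the ``main difficulty'' you anticipate does not arise: there is no global compatibility to verify, because nothing couples distinct poles.
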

\begin{proof}Substitution of (\ref{ps}) into (\ref{intCM}) gives a recurrent system of
equations
\beq\label{xis}
2\xi_{s+1}'=\p_t\xi_s+u\xi_s-\xi_s''
\eeq
We are going to prove by induction that this system has meromorphic solutions with
simple poles at all the zeros $q$ of $\tau$.

Let us expand $\xi_s$ at $q$:
\beq\label{5}
\xi_s=\frac{r_s}{x-q}+r_{s0}+r_{s1}(x-q)+\ldots\,,
\eeq
Suppose that $\xi_s$ is defined and equation (\ref{xis}) has a meromorphic solution.
Then the right-hand side of (\ref{xis}) has the zero residue at $x=q$, i.e.,
\beq\label{res}
res_{q}\left(\dot \xi_s+u\xi_s-\xi_s''\right)=\dot r_s+v_ir_s+2r_{s1}=0
\eeq
We need to show that the residue of the next equation vanishes also.
 From (\ref{xis}) it follows that the coefficients of the Laurent expansion for $\xi_{s+1}$
are equal to
\beq\label{6}
r_{s+1}=-\dot q r_s-2r_{s0},\quad
 2r_{s+1,1}=\dot r_{s0}-r_{s1}+w r_s+v r_{s0}\,.
\eeq
These equations imply
\beq
\dot r_{s+1}+v r_{s+1}+2r_{s+1,1}=-r_s(\ddot q-2w)-\dot q(\dot r_s+v r_s+2r_{s1})=0,
\eeq
and the lemma is proved.
\end{proof}

\subsection*{$\lambda$-periodic wave solutions}

Our next step in the proof is to fix a \emph{translation-invariant} normalization of $\xi_s$
which defines wave functions uniquely up to a $x$-independent factor.
It is instructive to consider first the case of the periodic potentials $u(x+1,t)=u(x,t)$
(see details in \cite{kp}).

Equations (\ref{xis}) are solved recursively by the formulae
\beq
\xi_{s+1}(x,t)=c_{s+1}(t)+\xi_{s+1}^0(x,t)\,,\label{kp1}
\eeq
\beq\label{kp2}
\xi_{s+1}^0(x,t)=\frac 12\int_{x_0}^x (\dot \xi_s-\xi_s''+u\xi_s)\,dx=0\, ,
\eeq
where $c_s(t)$ are \emph{arbitrary} functions of the variable $t$.
Let us show that the periodicity condition $\xi_s(x+1,t)=\xi_s(x,t)$
defines the functions $c_s(t)$ uniquely up to an additive constant.
Assume that $\xi_{s-1}$ is known  and satisfies the condition that the corresponding
function $\xi_s^0$ is periodic.
The choice of the function $c_s(t)$ does not affect the periodicity property of
$\xi_s$, but it does affect the periodicity in $x$ of the function
$\xi_{s+1}^0(x,t)$. In order to make  $\xi_{s+1}^0(x,t)$ periodic,
the function $c_s(t)$ should satisfy the linear differential equation
\beq\label{kp4}
\p_t c_s(t)+B(t)\,c_s(t)+\int_{x_0}^{x_0+1} \left(\dot\xi_s^0(x,t)+
u(x,t)\,\xi_s^0(x,y)\right)\,dx\ ,
\eeq
where $B(t)=\int_{x_0}^{x_0+1} u\, dx$.
This defines $c_s$ uniquely up to a constant.

In the general case, when $u$ is quasi-periodic, the normalization of the wave functions
is defined along the same lines.

Let $Y_U=\langle \bC U\rangle$ be the Zariski closure of the group
$\bC U=\{Ux\mid x\in\bC\}$ in $X$. Shifting $Y_U$ if needed, we may assume, without loss of generality, that
$Y_U$ is not in the singular locus $\Sigma$ defined as $\p_U$-invariant subset of the theta-divisor
$\Theta$, i.e. $Y_U\not\subset\Sigma$. Then, for a sufficiently small
$t$, we have $Y_U+Vt\notin\Sigma$ as well.
Consider the restriction of the theta-function onto the affine subspace
$\mathbb C^d+Vt$, where
$\mathbb C^d:=($the identity component of $\pi^{-1}(Y_U))$, and
$\pi\colon \mathbb C^g\to X=\mathbb C^g/\Lambda$ is the universal covering map of $X$:
\beq\label{ttt1}
\tau (z,t)=\theta(z+Vt), \ \ z\in \mathbb C^d.
\eeq
The function $u(z,t)=-2\partial_U^2\ln \tau$ is periodic with respect to the lattice
$\Lambda_U=\Lambda\cap \mathbb C^d$ and, for fixed $t$, has a double pole along the divisor
$\Theta^{\,U}(t)=\left(\Theta-Vt\right)\cap \mathbb C^d$.

\begin{lemma}\label{lem6.3} Let equations (\ref{cm50}) for zeros of $\tau(Ux+z,t)$ hold.
Then:

(i) equation (\ref{intCM}) with the potential $u(Ux+z,t)$
has a wave solution of  the form $\psi=e^{kx+k^2y}\phi(Ux+z,t,k)$
such that the coefficients $\xi_s(z,y)$ of the formal series
\beq\label{psi2}
\phi(z,t,k)=e^{bt}\biggl(1+\sum_{s=1}^{\infty}\xi_s(z,t\, k^{-s}\biggr)
\eeq
are meromorphic functions of the variable $z\in \mathbb C^d$
with a simple pole at
the divisor $\Theta^U(t)$,
\beq\label{v1}
\xi_s(z+\l,t)=\xi_s(z,t)=\frac{\tau_s(z,t)}{\tau(z,t)}\, ;
\eeq

(ii) $\phi(z,t,k)$ is quasi-periodic with respect
to $\Lambda_U$, i.e., for
$\l\in \Lambda_U$
\beq\label{v10}
\phi(z+\l,t,k;z_0)=\phi(z,t,k;z_0)\,\mu_{\l}(k)
\eeq

(iii) $\phi(z,t,k)$ is unique up to a $\partial_U$-invariant factor which is an exponent of the linear form,
\beq\label{v2}
\phi_1(z,t,k)=\phi(z,t,k) e^{(\ell (k),z)}, \quad (\ell(k),U)=0.
\eeq
\end{lemma}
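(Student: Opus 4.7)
The plan is to adapt the construction of the preceding (finite-dimensional) lemma and the periodic normalization (\ref{kp1})--(\ref{kp4}) to the abelian setting, with integration in $x$ replaced by integration in the $U$-direction inside $\bC^d$. Substituting the ansatz (\ref{psi2}) into (\ref{intCM}) and matching powers of $k$ produces the recursive system (\ref{xis}) with $\p_U$ in place of $\p_x$; the task is to solve it inductively in $s$, starting from $\xi_0=1$, within the class of $\Lambda_U$-periodic meromorphic functions on $\bC^d$ whose only singularities are simple poles on $\Theta^U(t)$.

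For existence of meromorphic solutions, I induct on $s$. The key observation is that the restriction of everything to a generic affine $U$-line $\ell_{z_0}\colon x\mapsto Ux+z_0$ recovers exactly the one-variable setup of the preceding lemma: along $\ell_{z_0}$ the zeros of $\tau(Ux+z_0,t)$ are simple and satisfy (\ref{cm50}) by the hypothesis of the present lemma, so the residue computation carried out there shows that $F_s:=\p_t\xi_s+u\,\xi_s-\p_U^2\xi_s$ has vanishing residue at every such zero. Consequently the $U$-antiderivative $\xi_{s+1}^0:=\tfrac12\int_U F_s$ has only simple poles on $\Theta^U(t)$; carried out uniformly in $z_0$, this produces a globally meromorphic $\xi_{s+1}^0$ on $\bC^d$ with polar locus exactly $\Theta^U(t)$, of the quotient form $\tau_{s+1}/\tau$ claimed in (\ref{v1}).

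The antiderivative is defined only modulo an additive $\p_U$-invariant function $c_{s+1}(z,t)$, which I pin down by requiring $\Lambda_U$-periodicity. Since $u$ and $\xi_s$ are already $\Lambda_U$-periodic by induction, the monodromy $\xi_{s+1}^0(z+\l,t)-\xi_{s+1}^0(z,t)$ for $\l\in\Lambda_U$ is itself $\p_U$-invariant, so cancelling it by an appropriate choice of $c_{s+1}$ is compatible with the $\p_U$-invariance constraint on $c_{s+1}$. Upon descent to the quotient $\bC^d/\bC U$ this cancellation becomes a first-order linear equation, the direct higher-dimensional analog of (\ref{kp4}). The genericity hypothesis $Y_U\not\subset\Sigma$ makes the relevant $\Lambda_U$-averages of the source well-defined, and the PDE then determines $c_{s+1}$ uniquely up to a single additive constant at each step $s$. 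These constants collect into the exponential gauge $e^{(\ell(k),z)}$ with $(\ell(k),U)=0$ of (\ref{v2}), and the same gauge accounts for the multiplier $\mu_\l(k)=e^{(\ell(k),\l)}$ in (\ref{v10}).

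The main obstacle is the normalization step: proving that the $\Lambda_U$-periodicity PDE for $c_{s+1}$ is consistently solvable on $\bC^d/\bC U$ and that its cokernel is exactly one-dimensional. In the periodic (one-dimensional) case this was elementary; in the abelian case it requires averaging over the dense $U$-orbits in $Y_U$, using $Y_U\not\subset\Sigma$ so that no $\p_U$-invariant component of the theta-divisor obstructs the averaging. Once this cohomological point is settled, the existence assertion of (i), the quasi-periodicity (\ref{v10}) of (ii), and the uniqueness (\ref{v2}) of (iii) all follow immediately from the inductive construction.
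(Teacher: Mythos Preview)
Your overall strategy matches the paper's: the survey does not give a self-contained proof of this lemma but explicitly says that after the periodic model case (\ref{kp1})--(\ref{kp4}) the quasi-periodic case is handled ``along the same lines,'' and then introduces $Y_U$, $\Lambda_U$ and $\Theta^U(t)$ before stating the lemma. So at the level of the inductive construction of meromorphic $\xi_s$ via the residue argument and the $\p_t$-normalization of the integration ``constants,'' there is nothing to distinguish your approach from the paper's.

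There is, however, a real inconsistency in your handling of (ii) and (iii). You assert that the normalization PDE determines $c_{s+1}$ ``uniquely up to a single additive constant,'' and then that ``these constants collect into the exponential gauge $e^{(\ell(k),z)}$.'' These two claims are incompatible: $z$-independent additive constants in the $\xi_s$ exponentiate to a $z$-independent factor $e^{g(k)}$, never to $e^{(\ell(k),z)}$. If the cokernel is truly one-dimensional then all $\xi_s$ are $\Lambda_U$-periodic, $\phi$ itself is periodic, $\mu_\l(k)\equiv 1$, and the ambiguity in (iii) is $z$-independent --- contradicting (\ref{v2}). Conversely, if the ambiguity is genuinely $e^{(\ell(k),z)}$ with $(\ell,U)=0$, then what is undetermined at step $s$ is a $\p_U$-invariant \emph{linear} function of $z$, so the cokernel has dimension $d-1$, not $1$. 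You must decide in which class you are solving for $\xi_s$ (holomorphic sections of $\tau^{-1}$ on the compact $Y_U$, forcing constants; or merely meromorphic on $\bC^d$ with simple poles along $\Theta^U(t)$, allowing linear terms) and make the cokernel count consistent with that choice. Relatedly, your identification $\mu_\l(k)=e^{(\ell(k),\l)}$ conflates the quasi-periodicity multiplier of a \emph{fixed} solution (part (ii)) with the gauge relating \emph{two different} solutions (part (iii)); these are distinct statements and your argument does not separate them.
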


\subsection*{The spectral curve}

The next goal is to show that $\l$-periodic wave solutions of equation (\ref{intCM}) are common eigenfunctions of rings of commuting operators.

Note that a simple shift $z\to z+Z$, where $Z\notin \Sigma,$ gives
$\l$-periodic wave solutions with meromorphic coefficients along the affine
subspaces $Z+\mathbb C^d$. These $\lambda$-periodic wave solutions are related to each other
by $\p_U$-invariant factor. Therefore choosing, in the neighborhood of any
$Z\notin \Sigma,$ a hyperplane orthogonal to the vector $U$ and
fixing initial data on this hyperplane at $y=0,$ we define the corresponding
series $\phi(z+Z,t,k)$ as a \emph{local} meromorphic function of $Z$ and the
\emph{global} meromorphic function of $z$.

\begin{lemma} Let the assumptions of Theorem~\ref{th1.1} hold. Then there is a unique
pseudo\-differential operator
\beq\label{LL}
\L(Z,\p_x)=\p_x+\sum_{s=1}^{\infty} w_s(Z)\p_x^{-s}
\eeq
such that
\beq\label{kk}
\L(Ux+Vy+Z,\p_x)\,\psi=k\,\psi\,,
\eeq
where $\psi=e^{kx+k^2y} \phi(Ux+Z,t,k)$ is a $\l$-periodic solution of
(\ref{intCM}).
The coefficients $w_s(Z)$ of $\L$  are meromorphic functions on the abelian variety $X$
with poles along  the divisor $\Theta$.
\end{lemma}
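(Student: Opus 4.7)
The plan is to construct $\L$ from the $\l$-periodic wave function of Lemma~\ref{lem6.3} by a standard formal recursion in $k^{-1}$, and then to upgrade its coefficients from local meromorphic functions on $\bC^g$ to meromorphic functions on the abelian variety $X$. Existence is essentially automatic; the content of the lemma is the uniqueness of $\L$ and the global, $\Lambda$-periodic nature of the $w_s$.

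First I would carry out the formal construction. For any series $\psi=e^{kx+k^2t}\bigl(1+\sum_{s\ge 1}\xi_s\,k^{-s}\bigr)$, the ansatz $\L=\p_x+\sum_{r\ge 1}w_r\p_x^{-r}$ with $\L\psi=k\psi$ determines the $w_r$ recursively: comparing coefficients of $k^{1-s}$ yields $w_s$ as a universal differential polynomial in $\xi_1,\dots,\xi_s$ and their $\p_U$-derivatives. This gives both existence and uniqueness at the formal level. Next, I would check invariance under the gauge freedom of Lemma~\ref{lem6.3}(iii): replacing $\phi(z,t,k)$ by $\phi(z,t,k)\,e^{(\ell(k),z)}$ with $(\ell(k),U)=0$ multiplies $\psi$ by the $x$-independent factor $e^{(\ell(k),Z)}$, which commutes with $\p_x$ and $\p_x^{-1}$. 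Hence $\L\psi=k\psi$ is preserved, and the $w_s(Z)$ are intrinsically defined.

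Third, I would establish meromorphicity in $Z$ and $\Lambda$-periodicity so that $w_s$ descends to $X$. By Lemma~\ref{lem6.3}(i) the $\xi_s(z,t)$ are meromorphic on $\bC^d$ with simple poles on $\Theta^{\,U}(t)$, and they depend meromorphically on the transverse shift parameter $Z$; since the $w_s$ are differential polynomials in the $\xi_s$ along $\p_U$, they extend to meromorphic functions on $\bC^g$ with poles only on $\Theta$. Periodicity under $\Lambda_U$ is immediate from (ii). For a general $\l\in\Lambda$ not in $\Lambda_U$, I would observe that $\phi(Ux+Z+\l,t,k)$ is another $\l$-periodic wave solution for the same potential $u(Ux+Z,t)$, so by the uniqueness part of Lemma~\ref{lem6.3} it differs from $\phi(Ux+Z,t,k)$ only by a $\p_U$-invariant factor $e^{(\ell_\l(k),z)}$ with $(\ell_\l(k),U)=0$; by the gauge-invariance step this factor drops out of the $w_s$, giving $w_s(Z+\l)=w_s(Z)$.

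The main obstacle is precisely this last periodicity argument. It relies on the rigidity of Lemma~\ref{lem6.3}(iii), namely that the only $\p_U$-invariant ambiguity in $\phi$ is the one-parameter family of exponentials of linear forms $(\ell(k),z)$ annihilating $U$, rather than an arbitrary $\p_U$-invariant meromorphic function on $\bC^d$, which could introduce genuine $Z$-dependence in the $w_s$ and spoil $\Lambda$-periodicity. Securing this requires the translation-invariant normalization used in the construction of $\phi$ together with its $\Lambda_U$-quasi-periodicity; once those are in place, the formal recursion and the gauge-invariance calculation assemble into the desired global operator $\L$ on $X$.
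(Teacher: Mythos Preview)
Your approach is the paper's: recursive determination of the $w_s$ as differential polynomials in the $\xi_s$, invariance of $\L$ under the $\p_U$-invariant ambiguity of $\phi$ so that the $w_s$ are intrinsically defined, and $\Lambda$-periodicity via the same uniqueness argument applied to $\phi(\,\cdot+\l,\,\cdot\,,k)$. Two steps the paper carries out that you skip over are worth noting. First, the construction of $\phi$ in Lemma~\ref{lem6.3} presupposes $Z\notin\Sigma$, so your gauge-invariance argument only yields well-defined meromorphic $w_s$ on $\bC^g\setminus\Sigma$; the paper then invokes that $\Sigma$ has codimension $\ge 2$ and Hartogs' theorem to extend $w_s$ to global meromorphic functions on $\bC^g$ --- your sentence ``they extend to meromorphic functions on $\bC^g$ with poles only on $\Theta$'' glosses over this. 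Second, the recursion gives $w_s=w_s(Z,t)$ a priori, whereas the lemma asserts $\L=\L(Ux+Vy+Z,\p_x)$, i.e.\ $w_s(Z,t)=w_s(Vt+Z)$; the paper obtains this from the translational invariance of $u$, observing that $\phi(Vs+Z,t-s,k)$ and $\phi(Z,t,k)$ are $\l$-periodic solutions of the same equation and hence differ by a $\p_U$-invariant factor that drops out of $\L$. Neither omission is fatal to your strategy, but both are needed to complete the proof.
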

\begin{proof}
Let $\psi$ be a $\l$-periodic wave solution. The substitution of (\ref{psi2}) in (\ref{kk})
gives a system of equations
that recursively define $w_s(Z,t)$ as differential polynomials in $\xi_s(Z,t)$.
The coefficients of $\psi$ are local meromorphic functions of $Z$, but
the coefficients of $\L$ are well-defined
\emph{global meromorphic functions} of on $\mathbb C^g\setminus\Sigma$, because
different $\l$-periodic wave solutions are related to each other by $\p_U$-invariant
factor, which does not affect $\L$. The singular locus is
of codimension $\geq 2$. Then Hartogs' holomorphic extension theorem implies that
$w_s(Z,t)$ can be extended to a global meromorphic function on $\mathbb C^g$.

The translational invariance of $u$ implies the translational invariance of
the $\l$-periodic wave solutions. Indeed, for any constant $s$ the series
$\phi(Vs+Z,t-s,k)$ and $\phi(Z,t,k)$ correspond to $\l$-periodic solutions
of the same equation. Therefore, they coincide up to a $\p_U$-invariant factor.
This factor does not affect $\L$. Hence, $w_s(Z,t)=w_s(Vt+Z)$.

The $\l$-periodic wave functions corresponding to $Z$ and
$Z+\lambda'$ for any $\lambda'\in \Lambda$
are also related to each other by a $\p_U$-invariant factor.
Hence, $w_s$ are periodic with respect to $\Lambda$ and therefore are
meromorphic functions on the abelian variety $X$.
The lemma is proved.
\end{proof}

\bigskip
Consider now the differential parts of the pseudo\-differential operators $\L^m$.
Let $\L^m_+$ be the differential operator such that
$\L^m_-=\L^m-\L^m_+=F_m\p^{-1}+O(\p^{-2})$. The leading
coefficient $F_m$ of $\L^m_-$ is the residue of $\L^m$:
\beq\label{res1}
F_m={res}_{\p}\  \L^m.
\eeq
 From the definition  of $\L$ it follows that $[\p_t-\p^2_x+u, \L^n]=0$. Hence,
\beq\label{lax}
[\p_t-\p_x^2+u,\L^m_+]=-[\p_t-\p_x^2+u, \L^m_-]=2\p_x F_m
\eeq

The functions $F_m$ are differential polynomials in the coefficients $w_s$ of $\L$.
Hence, $F_m(Z)$ are meromorphic functions on $X$. Next statement is crucial for
the proof of the existence of commuting differential operators associated with $u$.
\begin{lemma}[\cite{kr-schot}]The abelian functions $F_m$ have at most the second order pole on the divisor
$\Theta$.
\end{lemma}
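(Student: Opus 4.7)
The strategy is to realize $F_m=\res_\p \L^m$ as a residue at $k=\infty$ of the bilinear product $\phi\cdot\phi^+$, where $\phi^+$ is a dual $\l$-periodic wave function, and then read off the order of pole on $\Theta$ from the pole orders of $\phi$ and $\phi^+$ separately. Since each factor has a simple pole on $\Theta^U(t)$ and the exponential parts of $\psi,\psi^+$ cancel in the product, the bilinear object has at most a double pole there, and this propagates to every coefficient of its $k^{-1}$-expansion.

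\textbf{Step 1 (dual wave function).} First I would construct a formal dual solution
$$\psi^+(x,t,k)=e^{-kx-k^2t}\phi^+(Ux+Z,t,k),\qquad \phi^+=1+\sum_{s\ge 1}\xi^+_s\,k^{-s},$$
of the formal adjoint $(-\p_t-\p_x^2+u)\psi^+=0$. The recursion for $\xi^+_s$ has the same structure as \eqref{xis} (with appropriate sign changes), and the residue computation at a simple zero $q$ of $\tau$ reduces, exactly as in the preceding lemma, to the same identities \eqref{cm50}. Hence $\xi^+_s$ may be chosen meromorphic with simple poles on $\Theta^U(t)$. The normalization procedure around \eqref{kp4} can be adapted verbatim to fix the $\p_U$-invariant ambiguity and make $\phi^+$ $\l$-periodic; in doing so, the resulting Bloch multiplier on $\Lambda_U$ comes out as $\mu_\l(k)^{-1}$, the reciprocal of that of $\phi$.

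\textbf{Step 2 (residue identity and pole count).} I invoke the standard bilinear identity for Baker--Akhiezer pairs,
$$\res_\p \L^m=\res_k\bigl((\L^m\psi)\cdot\psi^+\bigr)=\res_k\bigl(k^m\psi\psi^+\bigr),$$
where $\res_k$ extracts the coefficient of $k^{-1}$ in the asymptotic expansion at $k=\infty$. Since the exponentials in $\psi$ and $\psi^+$ cancel, this becomes
$$F_m(Z,t)=\res_k\bigl(k^m\,\phi(Z,t,k)\,\phi^+(Z,t,k)\bigr).$$
The reciprocal-multipliers property of Step 1 makes $\phi\phi^+$ genuinely $\Lambda_U$-periodic on $\bC^d$, and the simple poles of $\phi$ and $\phi^+$ on $\Theta^U(t)$ combine to at most a double pole of $\phi\phi^+$. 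Consequently every coefficient of the $k^{-1}$-expansion of $\phi\phi^+$, and in particular $F_m$, has at most a second-order pole along $\Theta^U(t)$ as a function on $\bC^d$. The previous lemma already guarantees $F_m$ is a global meromorphic function on $X$ with poles only on $\Theta$; restricting to the dense affine leaf $\bC^d+Vt$ gives the pointwise pole order bound along $\Theta$, while Hartogs' theorem handles the codimension $\ge 2$ part of the singular locus $\Sigma$.

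\textbf{Main obstacle.} The delicate point is Step 1, and specifically the reciprocity of the Bloch multipliers. Constructing $\phi^+$ with simple poles is a routine variant of the $\phi$ construction, but one must show that the $\p_U$-invariant additive ambiguities $c^+_s(t)$ in the recursion can be fixed \emph{compatibly} with those of $\phi$ so that the multiplier of $\phi^+$ is exactly $\mu_\l(k)^{-1}$. Only under this compatibility does $\phi\phi^+$ descend to a genuine meromorphic function on $X$ and the pole count transfer from $\Theta^U(t)$ to $\Theta$. Once this bilinear structure is in hand, the bound on the order of the pole of $F_m$ on $\Theta$ is an immediate consequence of the simple-pole property of the wave and dual wave functions.
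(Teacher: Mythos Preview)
The paper you are working from is a survey and does not actually supply a proof of this lemma; it only cites \cite{kr-schot}. Your outline is correct and is essentially the argument given there: one constructs the adjoint $\lambda$-periodic wave function $\psi^+$ with simple poles on $\Theta^U(t)$, uses the identity $\res_{\partial}\L^m=\res_k\bigl(k^m\psi\psi^+\bigr)$, and reads off the double-pole bound from the product $\phi\phi^+$.

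One remark on what you flag as the ``main obstacle''. The reciprocity $\mu_\lambda^+\,(k)=\mu_\lambda(k)^{-1}$ is pleasant to have, but it is not what carries the argument. The global meromorphy and $\Lambda$-periodicity of $F_m$ are already secured by the previous lemma (the $w_s$, and hence $F_m$, are abelian functions on $X$); the bilinear formula is used only \emph{locally} near $\Theta$ to bound the pole order. So the genuine work in Step~1 is just the existence of $\phi^+$ with simple poles on $\Theta^U(t)$, which follows from the adjoint version of the same residue calculation \eqref{cm50}; the compatibility of the normalizations you worry about is not needed for the pole count. In \cite{kr-schot} this is handled by noting that $\res_k(\psi^+\psi)=1$ forces the product $\phi\phi^+$ to be strictly $\Lambda_U$-periodic independently of the separate normalizations, but again this is a convenience rather than the crux.
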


Let $ {\hat F}$ be a linear space generated by $\{F_m, \ m=0,1,\ldots\}$, where we
set $F_0=1$. It is a subspace of the
$2^g$-dimensional space of the abelian functions that have at most second order pole at
$\Theta$. Therefore, for all but $\hat g=\dim\,{ \hat F}$ positive integers $n$,
there exist constants $c_{i,n}$ such that
\beq\label{f1}
F_n(Z)+\sum_{i=0}^{n-1} c_{i,n}F_i(Z)=0.
\eeq
Let $I$ denote the subset of integers $n$ for which there are no such constants. We call
this subset the gap sequence.
\begin{lemma} Let $\L$ be the pseudo\-differential operator corresponding to
a $\l$-periodic wave function $\psi$ constructed above. Then, for the differential operators
\beq\label{a2}
L_n=\L^n_++\sum_{i=0}^{n-1} c_{i,n}\L^{n-i}_+=0, \ n\notin I,
\eeq
the equations
\beq\label{lp}
L_n\,\psi=a_n(k)\,\psi, \ \ \ a_n(k)=k^n+\sum_{s=1}^{\infty}a_{s,n}k^{n-s}
\eeq
where $a_{s,n}$ are constants, hold.
\end{lemma}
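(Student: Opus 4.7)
The plan combines three ingredients: the eigenvalue relation $\L\psi=k\psi$, the residue-vanishing relation \eqref{f1} defining the constants $c_{i,n}$, and the uniqueness of the $\lambda$-periodic wave function from Lemma~6.3.

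Starting from $\L^m\psi=k^m\psi$ and decomposing $\L^m=\L^m_++\L^m_-$, I apply each to $\psi$ and form the combination \eqref{a2}. This yields
\begin{equation*}
L_n\psi \;=\; \biggl(k^n+\sum_{i=0}^{n-1}c_{i,n}k^{n-i}\biggr)\psi \;-\; M_n\psi,
\end{equation*}
where $M_n$ is the analogous combination of negative parts $\L^m_-$. Its leading coefficient $\res_{\p} M_n$ is precisely the $F$-combination that vanishes by \eqref{f1}, so $M_n$ has order at most $-2$. Combined with \eqref{lax}, this same vanishing also shows $[\p_t-\p_x^2+u,L_n]=2\p_x(F_n+\sum c_{i,n}F_i)=0$, so $L_n$ commutes with the KP Lax operator and therefore preserves the space of formal solutions of $(\p_t-\p_x^2+u)\Phi=0$.

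Consequently $L_n\psi$ is itself a $\lambda$-periodic formal wave solution, with leading behavior $k^n e^{kx+k^2 t}(1+O(k^{-1}))$ and the same Bloch multipliers $\mu_\lambda(k)$ as $\psi$ (the latter because the coefficients of $L_n$ are meromorphic on $X$, hence $\Lambda$-periodic in $z$). The uniqueness assertion of Lemma~6.3(iii) then forces
\begin{equation*}
L_n\psi=a_n(z,t,k)\,\psi,\qquad a_n(z,t,k)=k^n+\sum_{s\ge 1}a_{s,n}(z,t)\,k^{n-s},
\end{equation*}
where each $a_{s,n}(z,t)$ is a $\p_U$-invariant, $\Lambda_U$-periodic meromorphic function of $z\in\bC^d$. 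Since $\bC U$ is dense in $Y_U=\bC^d/\Lambda_U$ by the very definition of $Y_U$, any such $\p_U$-invariant meromorphic function that descends to $Y_U$ must be constant. This gives constancy of $a_{s,n}$ in $z$. The $t$-independence follows from an analogous argument using the combined translation symmetry $(z,t)\mapsto(z+Vs,t-s)$, which produces from $\psi$ another $\lambda$-periodic wave solution obeying the same relation $L_n\psi=a_n\psi$.

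The main obstacle, I expect, is the careful extraction of $\p_U$-invariance of each coefficient $a_{s,n}$ from Lemma~6.3: the ambiguity in $\psi$ is described as multiplication by the specific exponential $e^{(\ell(k),z)}$ with $(\ell(k),U)=0$, and one must verify that this translates coefficient-by-coefficient into $\p_U$-invariance, with the $\Lambda_U$-periodicity (inherited from the $\Lambda$-periodicity of the coefficients of $L_n$) killing off the polynomial-in-$z$ terms that would otherwise appear. Once this is in place, the density argument $\overline{\bC U}=Y_U$ closes the proof.
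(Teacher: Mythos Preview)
Your argument is correct and follows the same architecture as the paper's proof: establish $[\p_t-\p_x^2+u,L_n]=0$ from \eqref{lax} and \eqref{f1}, deduce that $L_n\psi$ is again a formal wave solution, invoke the uniqueness of $\lambda$-periodic wave solutions to write $L_n\psi=a_n\psi$ with $\p_U$-invariant $a_n$, and then argue constancy. The decomposition via $M_n=\sum c_{i,n}\L^{n-i}_-$ is not used in the paper (it goes straight to the commutator), but it does no harm and gives you the leading behavior of $L_n\psi$ cleanly.

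The one genuine difference is in the final constancy step. You stay on $Y_U=\bC^d/\Lambda_U$ and argue that a $\p_U$-invariant meromorphic function on $Y_U$ is constant because $\bC U$ is Zariski dense there; this is valid (the pole divisor would be a proper $\p_U$-invariant closed subset, hence empty). The paper instead promotes $a_n$ to a function of the full parameter $Z\in\bC^g$: since the $\p_U$-invariant ambiguity in $\psi$ cancels in the ratio, $a_n$ is globally meromorphic on $\bC^g\setminus\Sigma$; $\p_U$-invariance forces holomorphy off $\Sigma$, Hartogs extends across $\Sigma$, and then $\Lambda$-periodicity plus Liouville give constancy on all of $X$. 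The paper's route also absorbs the $t$-dependence automatically (everything is already a function of $Z=Vt+z$ by the preceding lemma on $\L$), whereas you handle it separately via the translation $(z,t)\mapsto(z+Vs,t-s)$. Both routes reach the same conclusion.
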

\begin{proof} First note that from (\ref{lax}) it follows that
\beq\label{lax3}
[\p_t-\p_x^2+u,L_n]=0.
\eeq
Hence, if $\psi$ is a $\l$-periodic wave solution of (\ref{intCM})
corresponding to $Z\notin \Sigma$, then $L_n\psi$ is also a formal
solution of the same equation. That implies the equation
$L_n\psi=a_n(Z,k)\psi$, where $a$ is $\p_U$-invariant. The ambiguity in the definition of
$\psi$ does not affect $a_n$. Therefore, the coefficients of $a_n$ are well-defined
\emph{ global} meromorphic functions on $\mathbb C^g\setminus \Sigma$. The $\p_U$-
invariance of $a_n$ implies that $a_n$, as a function of $Z$, is holomorphic outside
of the locus. Hence it has an extension to a holomorphic function on $\mathbb C^g$.
Equations (\ref{v10}) imply that $a_n$ is periodic with respect to the lattice
$\Lambda$. Hence $a_n$ is $Z$-independent. Note that $a_{s,n}=c_{s,n},\ s\leq n$.
The lemma is proved.
\end{proof}

The operator $L_m$ can be regarded as a $Z\notin \Sigma$-parametric family   of
ordinary differential operators $L_m^Z$ whose coefficients have the form
\beq\label{lu}
L_m^Z=\p_x^n+\sum_{i=1}^m u_{i,m}(Ux+Z)\, \p_x^{m-i},\ \ m\notin I.
\eeq
\begin{corollary} The operators $L_m^Z$
commute with each other,
\beq\label{com1}
[L_n^Z,L_m^Z]=0, \ Z\notin \Sigma.
\eeq
\end{corollary}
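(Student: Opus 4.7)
The plan is to derive commutativity purely from the eigenvalue relation \eqref{lp} established in the preceding lemma, without invoking any extra structure. Fix $Z\notin\Sigma$, and let $\psi = e^{kx+k^2t}\phi(Ux+Z,t,k)$ be the corresponding $\l$-periodic wave function. The lemma tells us that both $L_n^Z$ and $L_m^Z$ act on $\psi$ as scalar multiplication by formal series $a_n(k)$ and $a_m(k)$ in $k^{-1}$ whose coefficients are constants (independent of $x$ and of $Z$). Since these eigenvalues manifestly commute,
\[
[L_n^Z,\,L_m^Z]\,\psi \;=\; L_n^Z(a_m(k)\psi)-L_m^Z(a_n(k)\psi)\;=\;(a_n(k)a_m(k)-a_m(k)a_n(k))\,\psi\;=\;0.
\]

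It then remains to show that the ordinary differential operator $P := [L_n^Z,L_m^Z]$, which has order at most $n+m-1$ and coefficients depending on $Ux+Z$, must vanish identically because it annihilates a wave function of this particular form. The key is the rigidity of $k$-degree: writing $P=\sum_{j=0}^{N}p_j(x)\,\p_x^j$ and substituting $\psi = e^{kx+k^2t}\bigl(1+\sum_{s\geq 1}\xi_s(Ux+Z,t)\,k^{-s}\bigr)$, one finds
\[
P\psi \;=\; e^{kx+k^2t}\bigl(p_N(x)\,k^N + O(k^{N-1})\bigr).
\]
The vanishing of the leading coefficient forces $p_N\equiv 0$; peeling off one power of $k$ at a time and using the already-established vanishing of the higher $p_j$'s successively kills every $p_j$, whence $P\equiv 0$.

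The only delicate point is this final leading-order vanishing argument, but it is entirely standard in Baker-Akhiezer theory and is driven by the rigid fact that each $\p_x$ applied to $e^{kx+k^2t}\phi$ produces exactly one extra factor of $k$ modulo strictly lower-order terms in $k^{-1}$; no cancellations between different $p_j$'s can occur at the top $k$-power. Consequently $[L_n^Z,L_m^Z]=0$ for every $Z\notin\Sigma$, exhibiting the $Z$-parametric family of commuting rings $\{L_m^Z : m\notin I\}$ uniformly in $Z$, which is precisely the input required for the algebro-geometric reconstruction of the spectral curve in the subsequent step of the proof of Theorem~\ref{th1.1}.
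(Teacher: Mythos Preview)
Your proof is correct and follows exactly the paper's approach: the paper's argument is the terse three-line observation that $[L_n^Z,L_m^Z]\psi=0$ together with the fact that the commutator is an ordinary differential operator forces it to vanish. You have simply spelled out the standard leading-order-in-$k$ induction that justifies the paper's final ``Hence,'' which the paper leaves implicit.
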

From (\ref{lp}) it follows that $[L_n^Z,L_m^Z]\psi=0$. The commutator is an ordinary
differential operator. Hence, the last equation implies (\ref{com1}).

\begin{lemma}\label{lem6.7} Let $\A^Z,\ Z\notin \Sigma,$ be a commutative ring of ordinary differential
operators spanned by the operators $L_n^Z$. Then there is an irreducible algebraic
curve $\G$ of arithmetic genus $\hat g=\dim\, {\hat F}$ such that $\A^Z$ is isomorphic
to the ring $A(\G,P_0)$ of the meromorphic functions on $\G$ with the only pole at
a smooth point $P_0$. The correspondence $Z\to \A^Z$ defines a holomorphic
imbedding of $X\setminus \Sigma$ into the space of torsion-free rank 1 sheaves $\mathcal F$ on $\G$
\beq\label{is}
j\colon X\setminus\Sigma\longmapsto \overline{ Pic}(\G).
\eeq
\end{lemma}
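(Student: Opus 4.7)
The plan is to apply the classical Burchnall-Chaundy-Krichever correspondence between commutative rings of ordinary differential operators and algebraic curves to the ring $\A^Z$. By the preceding corollary $\A^Z$ is commutative, and via the joint eigenvalue assignment $L_n^Z \mapsto a_n(k) = k^n + \sum_{s\geq 1} a_{s,n}k^{n-s}$ provided by the $\l$-periodic wave function $\psi$, each operator $L \in \A^Z$ maps to a Laurent series $a_L(k) \in \bC((k^{-1}))$. Since $\psi \not\equiv 0$ this assignment is an injective ring homomorphism, so $\A^Z$ is an integral domain. The orders of operators in $\A^Z$ form a subsemigroup $S$ of $\bZ_{\geq 0}$ whose positive part is, by construction of the $L_n^Z$, exactly the complement of the gap set $I$.

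Next I would define $\G$ as the projective completion of the affine spectral curve $\mathrm{Spec}\,\A^Z$, obtained by adjoining a single smooth point $P_0$ with local parameter $k^{-1}$, so that $\A^Z$ is identified with the ring $A(\G,P_0)$ of meromorphic functions on $\G$ regular outside $P_0$. Irreducibility of $\G$ is equivalent to $\A^Z$ being a domain. The arithmetic genus of $\G$ is then computed by the classical gap formula as $|\bZ_{\geq 0}\setminus S| = |I| = \dim \hat F = \hat g$. Crucially, the defining constants $c_{i,n}$ and the eigenvalue coefficients $a_{s,n}$ are independent of $Z$, so the abstract ring $\A^Z$, and hence the pair $(\G,P_0)$, is $Z$-independent.

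To produce the sheaf $j(Z)$, I would assign to each point $p \in \G \setminus \{P_0\}$, viewed as a character $\chi_p: \A^Z \to \bC$, the simultaneous eigenspace $\F(Z)_p := \{\phi : L\phi = \chi_p(L)\phi \text{ for all } L \in \A^Z\}$. The resulting coherent sheaf $\F(Z)$ on $\G$ is torsion-free of rank $1$, since $\A^Z$ is a domain and the joint eigenspace has generic dimension $1$ (the $\l$-periodic wave function $\psi$ itself realizes the generic fiber). Setting $j(Z) := \F(Z) \in \overline{\Pic}(\G)$ gives the desired map. Its holomorphy in $Z$ follows from the meromorphicity of the coefficients $w_s(Z)$ of $\L$, and hence of the coefficients of each $L_n^Z$, established in the previous lemma; the Krichever spectral-data map depends holomorphically on operator coefficients. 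Periodicity of these coefficients with respect to $\Lambda$ lets $j$ descend from $\bC^g \setminus \Sigma$ to $X \setminus \Sigma$.

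The main obstacle will be establishing $p_a(\G) = \hat g$: while $|I| = \dim \hat F$ is tautological from the definition of $I$, identifying this count with the arithmetic genus of $\G$ requires the Burchnall-Chaundy dimension computation and, in effect, the verification that $\A^Z$ is of rank $1$ in the sense of Krichever, so that its order semigroup genuinely controls the gap count. A secondary technical point is to verify that $\F(Z)$ remains torsion-free of rank $1$ across possible singular points of $\G$; this follows from $\A^Z$ being a domain together with the generic one-dimensionality of the joint eigenspace already exhibited by $\psi$.
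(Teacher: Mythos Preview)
Your proposal is correct and follows essentially the same route as the paper: both reduce the lemma to the classical Burchnall--Chaundy--Krichever correspondence, which the paper simply cites as a black-box theorem immediately after the lemma, while you spell out how the pieces (integral domain via the eigenvalue map $L_n^Z\mapsto a_n(k)$, $Z$-independence of $\G$ from the constancy of the $a_{s,n}$, genus from the gap count, sheaf from common eigenspaces) fit together. The only point you leave implicit that is worth flagging is the \emph{imbedding} (injectivity) part of the statement; the paper does not argue it separately either, relying on the one-to-one direction of the cited correspondence (rings modulo conjugation $\leftrightarrow$ spectral data), so distinct $Z$ giving non-conjugate potentials $u(Ux+Z)$ forces distinct sheaves.
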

The statement of the Lemma is a corollary of the following fundamental fact from the theory of commuting differential operators
\begin{theorem} \cite{ch,kr1,kr2,mum} There is a natural correspondence
\beq\label{corr}
\A\longleftrightarrow \{\G,P_0, [k^{-1}]_1, {\mathcal F} \}
\eeq
between \emph{regular} at $x=0$ commutative rings $\A$ of ordinary linear
differential operators containing a pair of monic operators of co-prime orders, and
sets of algebraic-geometrical data $\{\G,P_0, [k^{-1}]_1, \mathcal F \}$, where $\G$ is an
algebraic curve with a fixed
first jet $[k^{-1}]_1$ of a local coordinate $k^{-1}$ in the neighborhood of a smooth
point $P_0\in\G$ and $\mathcal F$ is a torsion-free rank 1 sheaf on $\G$ such that
\beq\label{sheaf}
H^0(\G,\mathcal F)=H^1(\G,\mathcal F)=0.
\eeq
The correspondence becomes one-to-one if the rings $\A$ are considered modulo conjugation
$\A'=g(x)\A g^{-1}(x)$.
\end{theorem}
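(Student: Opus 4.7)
The plan is to construct the correspondence in both directions and then to identify the source of the conjugation ambiguity on each side.

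For the forward direction $\{\G, P_0, [k^{-1}]_1, \F\}\to \A$, I would specialize the one-point \BA construction of Example~1 to the single variable $x=t_1$. The vanishing $H^0(\G,\F)=H^1(\G,\F)=0$ is precisely the sheaf-theoretic condition (replacing the generic-divisor assumption) that guarantees the existence and uniqueness, up to a scalar, of the \BA function $\psi(x,p)$ normalized by $\xi_0\equiv1$. For any meromorphic function $f$ on $\G$ with its only pole at $P_0$ of order $n$, the product $f(p)\psi(x,p)$ has the same analytic structure as $\psi$ on $\G\setminus\{P_0\}$ but expansion $k^n e^{kx}(1+O(k^{-1}))$ at $P_0$. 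The uniqueness argument used in the proof of Theorem~\ref{th2.1} then produces a unique monic ordinary differential operator $L_f$ in $\p_x$ of order $n$ with $L_f\psi=f(p)\psi$, and the same uniqueness forces $L_f L_g=L_{fg}$. This embeds $A(\G,P_0)$ as a commutative ring $\A$ of differential operators. The required pair of monic operators of coprime orders is supplied by any two functions in $A(\G,P_0)$ whose pole orders at $P_0$ are coprime, which exist because the complement of the Weierstrass gap sequence at $P_0$ is a numerical semigroup of finite index.

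For the reverse direction $\A\to\{\G,P_0,[k^{-1}]_1,\F\}$, I would start from coprime monic operators $L_n,L_m\in\A$. The Burchnall--Chaundy argument produces an irreducible polynomial relation $R(L_n,L_m)=0$ defining an affine plane curve $\G_{\rm aff}$; for any $A\in\A$, commutativity implies that $A$ acts as a scalar $a(\lambda,\mu)$ on each generic common eigenspace of $L_n,L_m$, so $A\mapsto a$ embeds $\A$ into the coordinate ring of a smooth compactification $\G$ obtained by adding a single point $P_0$ at infinity. The local coordinate $k^{-1}$ at $P_0$ is supplied by the formal $n$-th root $\L=L_n^{1/n}$: it is a pseudo-differential operator of the shape \eqref{LL} with leading symbol $\p_x$, and its eigenvalue on the common eigenfunction provides the first jet $[k^{-1}]_1$ determined by the subleading coefficients of $L_n$. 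The sheaf $\F$ is defined as the line bundle (or its torsion-free limit) whose fibre over $P\in\G$ is the line of common eigenfunctions of $\A$ evaluated at $x=0$; regularity of $\A$ at $x=0$ together with a Riemann--Roch count yields $H^0(\G,\F)=H^1(\G,\F)=0$.

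The hard part is to show that these two constructions are mutually inverse and to pin down the role of the gauge conjugation $\A\to g(x)\A g(x)^{-1}$. Such a conjugation is exactly the rescaling $\psi\to g(x)\psi$, which does not alter the algebraic--geometric data $\{\G,P_0,[k^{-1}]_1,\F\}$ because $\F$ is defined only up to isomorphism, and conversely any two rings producing the same data differ by such a gauge. Establishing this identification requires tracking how $\L$ transforms under gauge and verifying, by a Hartogs extension argument of the type used above for the $Z$-dependent family, that the eigenvalues $a_n(k)$ on a common eigenfunction are $Z$-independent and coincide with the expansions at $P_0$ of functions in $A(\G,P_0)$. Once this identification is in place, the two constructions combine to give the bijection modulo conjugation asserted in the theorem.
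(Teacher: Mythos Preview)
The paper does not prove this theorem at all: it is quoted as a fundamental fact from the literature, with the citation \cite{ch,kr1,kr2,mum} and the remark that the invariant (sheaf-theoretic) formulation is due to Mumford. There is therefore no ``paper's own proof'' to compare against.

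That said, your outline is essentially the classical route taken in the cited works: the Krichever direction via the one-point \BA function (using $H^0=H^1=0$ in place of the generic-divisor hypothesis to guarantee existence and uniqueness, and the argument of Theorem~\ref{th2.1} to produce $L_f$), and the Burchnall--Chaundy/Mumford direction via the polynomial relation $R(L_n,L_m)=0$ and the eigenline sheaf at $x=0$. One small misstep: the Hartogs-type argument you invoke at the end is used in the paper to pass from local to global on the abelian variety $X$ for the $Z$-parametric family of rings $\A^Z$; it plays no role in the proof of the present theorem, which concerns a single ring $\A$ and requires no such extension. The verification that the two constructions are mutually inverse up to gauge is a direct computation with the \BA function and its dressing operator, not an analytic continuation argument.
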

Note that in \cite{kr1,kr2,ch} the main attention was paid to the generic case of
the commutative rings corresponding to smooth algebraic curves.
The invariant formulation of the correspondence given above is due to Mumford \cite{mum}.

The algebraic curve $\G$ is called the spectral curve of $\A$.
The ring $\A$ is isomorphic to the ring $A(\G,P_0)$ of meromorphic functions
on $\G$ with the only pole at the point $P_0$. The isomorphism is defined by
the equation
\beq\label{z2}
L_a\psi_0=a\psi_0, \ \ L_a\in \A, \ a\in A(\G,P_0).
\eeq

\begin{lemma}[\cite{kr-schot}] The linear space ${\hat F}$ generated by the abelian functions $\{F_0=1,
F_m=\res_\p \L^m\},$ is
a subspace of the space $H$ generated by $F_0$ and by the abelian functions
$H_i=\p_U\p_{z_i}\ln \theta(Z)$.
\end{lemma}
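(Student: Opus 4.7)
The plan is to establish the explicit formula
\begin{equation*}
F_m = \p_U \p_{V_m} \ln \theta(Z) + c_m
\end{equation*}
for each $m \geq 1$, with $V_m \in \bC^g$ and $c_m \in \bC$. Since $\p_U \p_{V_m} \ln \theta = \sum_i (V_m)_i H_i$, this immediately places $F_m$ inside $H$.

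First, the cases $m = 1, 2$ can be disposed of by direct computation. Substituting $\psi = e^{kx+k^2 t}\phi$ with $\phi = 1 + \sum_{s\ge 1}\xi_s k^{-s}$ into $\L\psi = k\psi$ yields at order $k^{-1}$ the relation $w_1 = -\p_U \xi_1$. Matching the two expressions $u = 2\p_U\xi_1$ (obtained at order $k^0$ from \eqref{intCM}) and $u = -2\p_U^2\ln\theta$ (from the abelian ansatz) forces $\xi_1 = -\p_U\ln\theta + h$ with $h$ being $\p_U$-invariant, and hence $F_1 = w_1 = \p_U^2\ln\theta = \sum_i U_i H_i$. Using $\L_+^2 = \p_x^2 - u$ and comparing $k^{-1}$ coefficients in $(\p_t - \L_+^2)\psi = \L_-^2\psi$ similarly produces $F_2 = -\p_t\xi_1 = \p_V\p_U\ln\theta - \p_t h$. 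Because $F_2$ is abelian by the preceding lemma, $\p_t h$ is a $\p_U$-invariant abelian function, and hence by density of $\bC U$ in $X$ it is a constant.

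For $m \geq 3$ the same calculation would be pushed through by introducing the full hierarchy of compatible flows $\p_{s_m}\L := [\L_+^m, \L]$, with $s_1 = x$ and $s_2 = t$; compatibility is a formal consequence of $[\L^n, \L^m] = 0$. The same order-$k^{-1}$ matching as above yields $F_m = -\p_{s_m}\xi_1$. The crucial step is then to show that each $\p_{s_m}$ descends to a linear flow on $X$ along a constant vector $V_m \in \bC^g$; this should follow from the uniqueness up to $\p_U$-invariant factor of the $\l$-periodic wave functions (Lemma~\ref{lem6.3}) together with the rigidity of the theta-function ansatz \eqref{ttt1}, which forces any such consistent extension to act by translation of the argument of $\theta$. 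One then obtains $F_m = \p_U \p_{V_m}\ln\theta + c_m$, with $c_m = -\p_{s_m} h$ a constant by the same abelian-versus-$\p_U$-invariance argument used for $F_2$.

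The principal obstacle is to pin down the vectors $V_m$ as honest elements of $\bC^g$, especially when $m$ lies in the gap set $I$ where $\L_+^m$ by itself does not yield a commuting differential operator in $\A^Z$. For $m \notin I$ the vector $V_m$ can be identified from the action of the commuting operator $L_m^Z$ of Lemma~\ref{lem6.7} on the theta argument; for $m \in I$ one can alternatively invoke the linear dependence \eqref{f1} to write $F_m$ as a linear combination of the $F_n$ with $n \notin I$, each already shown to lie in $H$, which settles $F_m \in H$.
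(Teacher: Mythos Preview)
The paper does not prove this lemma in the text; it only cites \cite{kr-schot}. Your target formula $F_m=\partial_U\partial_{V_m}\ln\theta+c_m$ is exactly what that reference establishes, and your treatment of $m=1,2$ is essentially correct---though the phrase ``by density of $\bC U$ in $X$'' is unjustified (no such density is assumed) and should be replaced by: $\partial_t h$ is a $\partial_U$-invariant abelian function whose pole locus, being $\partial_U$-invariant and contained in $\Theta$, lies in $\Sigma$; by Hartogs it is holomorphic on $X$, hence constant.

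The genuine gap is for $m\ge 3$. You introduce the higher flow $\partial_{s_m}$ and obtain $F_m=-\partial_{s_m}\xi_1$; that is fine. But the ``crucial step''---that $\partial_{s_m}$ acts on $X$ as translation by a constant vector $V_m$---is precisely equivalent to the lemma you are proving. From \eqref{lax} one has $\partial_{s_m}u=2\partial_xF_m$, and this equals $\partial_{V_m}u=-2\partial_U^2\partial_{V_m}\ln\theta$ for a constant $V_m$ exactly when $F_m-\partial_U\partial_{V_m}\ln\theta$ is $\partial_U$-invariant, i.e., exactly when $F_m\in H$. Your appeal to ``uniqueness up to $\partial_U$-invariant factor'' and ``rigidity of the theta-function ansatz'' does not break this circularity.

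Your fallback in the final paragraph is also broken in two places. First, the roles of $I$ and its complement are reversed: the linear dependence \eqref{f1} holds for $n\notin I$, so by the very definition of the gap set, $F_m$ with $m\in I$ is \emph{not} expressible as a combination of the other $F_n$. Second, for $m\notin I$ the operator $L_m^Z$ satisfies the eigenvalue equation $L_m^Z\psi=a_m(k)\psi$, not a flow equation, and you have not explained how that identifies any vector $V_m$.

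The argument in \cite{kr-schot} avoids higher flows entirely: it works directly with the simple-pole structure $\xi_s=\tau_s/\theta$ from Lemma~\ref{lem6.3} and the recursion \eqref{xis} to identify the double-pole part of $F_m$ along $\Theta$, matches it with that of $\partial_U\partial_{V_m}\ln\theta$ for a suitably constructed constant vector $V_m$, and then uses that any abelian function with at most a simple pole on $\Theta$ is constant.
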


The construction of multivariable Baker-Akhiezer  functions presented for smooth curves
is a manifestation of general statement valid for singular spectral curves:
flows of the KP hierarchy define deformations of the commutative
rings $\A$ of ordinary linear differential operators. The spectral curve
is invariant under these flows. For a given spectral curve $\G$ the orbits of
the KP hierarchy are isomorphic to the generalized Jacobian $J(\G)={Pic}^0 (\G)$,
which is the equivalence classes of zero degree divisors on the spectral curve
(see details in \cite{shiota,kr1,kr2,wilson}).
Hence, for any $Z\notin \Sigma$, the orbit of the KP flows defines an holomorphic imbedding:
\beq\label{imb}
i_Z\colon J(\G)\longmapsto X.
\eeq
From (\ref{imb}) it follows that $J(\G)$ is \emph{compact}.

The generalized Jacobian of an algebraic curve is compact
if and only if the curve is \emph{smooth} (\cite{mdl}). On a smooth algebraic curve
a torsion-free rank 1 sheaf is a line bundle, i.e., $\overline { Pic} (\G)=J(\G)$.
Then (\ref{is}) implies that $i_Z$ is an isomorphism. Note that
for the Jacobians of smooth algebraic curves the bad locus $\Sigma$ is empty
(\cite{shiota}), i.e., the imbedding $j$ in (\ref{is}) is defined everywhere
on $X$ and is inverse to $i_Z$. Theorem~\ref{th1.1} is proved.

\section{Characterizing Jacobian of curves with involution}

As it was mentioned in Introduction the problem of characterization of Jacobians of curves with involution addressed in \cite{kr-inv} was motivated by construction of solutions of two-dimensional integrable systems with symmetries.
To the best of our knowledge from pure algebraic-geometri\-cal perspective the characterization problem of curves with involution in terms of their Jacobians has never been considered in its full generality. The only known to the author works in this direction are \cite{bv,grush,poor}.

\medskip
Two characterizations which distinguish such Jacobians were obtained in \cite{kr-inv} within the framework of  cases (i) and (ii) of Welter's conjecture. Both of them are limited to the case of involutions having at least one fixed point, i.e. to two-sheeted \emph{ramified} covers.

In a certain sense the setup we consider -- the Jacobian and the Prym variety in it -- resembles the setup arising in the famous Schottky-Yung relations and it is tempting to find a way to get these relations by means of the soliton theory. Unfortunately that challenging problem remains open.

The first characterization, related to the KP theory, is limited to the case of ramified cover by the obvious reason, since a curve with one marked point is used in constructing its solutions.

\begin{theorem}\label{thm:main} An indecomposable principally polarized abelian variety $(X,\theta)$ is the Jacobian variety of a smooth algebraic curve $\G$ of genus $g$ with involution $\s:\G\to \G$  having at least one point fixed if and only if there exist $g$-dimensional vectors $U\neq\,0,V,A,\zeta $ and constants $\Omega_1,\Omega_2, b_1$ such that:

the condition $(A)$ of Theorem \ref{th1.1} is satisfied and

$(B)$ the intersection of the theta-divisor $\Theta=\{Z\in X\,\mid\, \theta(Z)=0\}$ with a shifted abelian subvariety $Y\subset X$ which is the Zariski closure of $\pi(Ux+\zeta) \subset X$ is {reduced} and the equation
\beq\label{C}
\p_V\theta|_{\Theta\cap Y}=0
\eeq
holds.

Moreover, the locus $\Pi$ of points $\zeta\in X$ for which the equation \eqref{C} holds is the locus
of points for which the equation $\zeta+\sigma(\zeta)=2P+K\in X $, where $K$ is the canonical class,
holds.
\end{theorem}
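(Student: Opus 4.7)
By Theorem~\ref{th1.1}, condition $(A)$ alone forces $(X,\theta)=J(\Gamma)$ for some smooth curve $\Gamma$ of genus $g$, with $U,V$ identified as the first two KP directions $U_1,V_2$ at a marked point $P\in\Gamma$ (relative to a chosen local parameter $k^{-1}$). The proof thus reduces to relating the existence of an involution $\sigma$ fixing $P$ to condition $(B)$, and to identifying $\Pi$ as the translate $\{\zeta:\zeta+\sigma(\zeta)=2P+K\}$ of the Prym.

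\emph{Necessity.} If $\sigma$ exists, choose the local parameter at $P$ so that $\sigma^* k=-k$ (possible because $d\sigma|_{T_P\Gamma}=-1$). Then the normalized second-kind differentials satisfy $\sigma^*\,d\Omega_i=(-1)^i d\Omega_i$, giving $\sigma^* U=-U$ and $\sigma^* V=V$; in particular $\bC U$ lies in the Prym subvariety $X^-\subset X$. For $\zeta$ with $\zeta+\sigma^*\zeta=2P+K$, the subvariety $Y=\overline{\bC U+\zeta}$ is stable under $\sigma^*|_Y:Z\mapsto 2P+K-Z$. The BA function of $(\Gamma,P)$ built from a $\sigma$-odd local parameter inherits a reflection symmetry which, after exponential factors, produces the identity $\tau(-x,y,\zeta)=\tau(x,y,\sigma^*\zeta)$ for the $\tau$-function $\tau(x,y,\zeta)=\theta(Ux+Vy+\zeta)$. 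Differentiating in $y$ at $y=0$ and restricting to the zeros of $\tau(\,\cdot\,,0,\zeta)$ (i.e.\ to $\Theta\cap Y$) then forces $\p_V\theta|_{\Theta\cap Y}=0$, and shows $\{\zeta:\zeta+\sigma(\zeta)=2P+K\}\subseteq\Pi$.

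\emph{Sufficiency.} Assume $(B)$. As $\tau(x,y,\zeta)=\theta(Ux+Vy+\zeta)$ is a KP $\tau$-function admitting a meromorphic wave solution $\psi$, its zeros $q_i(y)$ in $x$ obey the Abelian Calogero--Moser equations \eqref{eq1}. Reducedness of $\Theta\cap Y$ makes the $q_i(0)$ simple; combined with $\p_x\tau\cdot\dot q_i+\p_y\tau=0$, the vanishing $\p_V\theta|_{\Theta\cap Y}=0$ is equivalent to $\dot q_i(0)=0$ for every $i$, i.e.\ to the vanishing of the Laurent coefficient $\b$ of $\psi$ at every pole. Following Section~\ref{s:proofs}, the $\l$-periodic wave solution of \eqref{intCM} then admits a second wave solution obtained by $y\mapsto -y$; having the same spectral data, by Lemma~\ref{lem6.7} the two solutions are intertwined by an involution $\sigma$ of the spectral curve $\Gamma$. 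The normalization at the exponential singularity forces $\sigma(P)=P$, and tracking the action of $\sigma$ on the pole divisor of $\psi$ in \eqref{BA1} identifies $\zeta$ as satisfying $\zeta+\sigma(\zeta)=2P+K$, establishing both sufficiency and the reverse inclusion $\Pi\subseteq\{\zeta:\zeta+\sigma(\zeta)=2P+K\}$.

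\emph{Main obstacle.} The hardest step is the sufficiency direction: promoting the single-time first-order condition $\dot q_i(0)=0$ on the CM dynamics to a genuine global involution of the reconstructed curve $\Gamma$. This parallels the leap from the one-equation condition $(C)$ of Theorem~\ref{th1.1} to the full Welters conjecture, and it is precisely the point where the reducedness hypothesis in $(B)$ is indispensable: without it the candidate involution could degenerate along a proper component of $\Theta\cap Y$ and fail to extend to all of $\Gamma$.
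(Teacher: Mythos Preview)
Your overall strategy is sound: by Theorem~\ref{th1.1} condition $(A)$ alone already forces $X=J(\G)$, so the task reduces to linking $(B)$ with an involution on $\G$; and you correctly extract from $(B)$ the turning-point condition $\dot q_i(0)=0$ via $\p_U\theta\cdot\dot q+\p_V\theta=0$ at a simple zero. Your necessity sketch is in the right spirit, though the role of the specific constraint $\zeta+\sigma(\zeta)=2P+K$ in producing the vanishing \eqref{C} is left implicit.

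The genuine gap is in the sufficiency direction. You write that the $\l$-periodic wave solution ``admits a second wave solution obtained by $y\mapsto -y$'' and then invoke Lemma~\ref{lem6.7} to produce $\sigma$. Neither step works as stated. Replacing $t$ by $-t$ in a solution of $(\p_t-\p_x^2+u)\psi=0$ yields a solution of an equation with potential $u(x,-t)$, not of \eqref{intCM}; in the abelian setting the zeros $q_i(t)$ do not form a closed autonomous ODE system (unlike the elliptic case), so $\dot q_i(0)=0$ does not by itself make $u$ even in $t$. And Lemma~\ref{lem6.7} only asserts that the ring $\A^Z$ determines a spectral curve and a sheaf; it contains no statement about two wave solutions being ``intertwined by an involution.'' The paper's actual route, stated immediately after the theorem, goes through the \emph{formally adjoint} wave function $\psi^*$ solving \eqref{adjCM}: condition $(B)$ allows a choice of local parameter $k^{-1}$ for which
\[
\psi(x,0,-k)=\psi^*(x,0,k),
\]
and since the adjoint Baker--Akhiezer function encodes the dual algebro-geometric data on the same spectral curve, this identity at $t=0$ is precisely what forces $\G$ to carry the involution $k\mapsto -k$ fixing $P$. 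Your argument never introduces $\psi^*$, so the central mechanism is missing. A smaller point: the reducedness of $\Theta\cap Y$ is needed so that the zeros $q_i$ are simple and the Laurent analysis \eqref{ue}--\eqref{eq1} (hence the turning-point reading of $(B)$) is valid in the first place, not to prevent an already-constructed involution from degenerating along a component.
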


The condition $(B)$ implies that

$(C)$ \emph{there is a constant $b_2$ such that the equality
\beq\label{b1}
\p_U\p_V\ln \theta|_{\widehat Y}=b_2
\eeq
 holds on $Y$}.

From the addition theorem \eqref{ThetaQuad} it follows that \eqref{b1} is equivalent to the condition that the vector $\left(\p_U\p_V K(0)-b_2K(0)\right)$ is \emph{orthogonal to the image under the Kummer map $K(\Pi)$ of the shifted abelian subvariety} $\widehat Y$:
\beq\label{ort}
\sum_{\e\in((1/2)\mathbb Z/\mathbb Z)^g}\left(\p_U\p_V\Theta[\e,0](0)-b_2\Theta[\e,0](0)\right)\Theta[\e,0](z)=0,\ \ z\in \widehat Y
\eeq
whence follows the condition \emph{of a kind of flatness} of the image under the Kummer map of the shifted Prym subvariety $\Pi \subset X $, that is, $ K (\Pi) $ lies in a proper (projective) linear subspace.

\medskip
The explicit meaning $(B)$ is as follows. As shown in \cite{fay,shiota} the affine line $Ux+Z$ is not contained in $\Theta$ for any vector $Z$. Hence, the function $\tau(x,t):=\theta(Ux+Vt+z), z\in Y$ is a \emph{nontrivial} entire function of $x$. The statement that $\Theta\cap Y$ is reduced means that zeros $q(t)$ of $\tau$ considered as a function of $x$ (depending on $t$) are generically simple, $\tau(q(t),t)=0,\, \tau_x(q(t),t)\neq 0$. Then \eqref{b1} is the equation
\beq\label{turningpoints}
\p_t\, q|_{t=0}=0
\eeq

\medskip
In the case, when $U$ spans an elliptic curve in the Jacobian, the statement that from $(B)$ it follows that the corresponding curve $\G$ admits an involution is obvious. Indeed, in that case the curve is the normalization of the spectral curve of $N$ point elliptic CM systems. The latter is defined by the characteristic equation
$$\det (k\cdot \mathbb I-L(z))=0$$

\noindent
of the matrix $L(z)$ defined in \eqref{LCM} with $q_i=q_i(0)$ and $p_i=\dot q_i(0)$, where $q_i(t)$  are roots of the equation  $\theta(Ux+Vt+z)=0$. If equation \eqref{turningpoints} holds, i.e. $p_i=0$, then it is easy to see that the matrix $L(z)$ satisfies the equation $L^t(z)=-L(-z)$. The latter implies that the curve is invariant under the involution $(k,z)\to (-k,-z)$. That observation made in \cite{kn} was the main motivation behind  \cite{kr-inv}.

At the heart of the proof in the general case is the statement that if $(B)$ is satisfied then there is a local coordinate $k^{-1}$ such that if $\psi(x,t,k)$ is the wave solution of \eqref{intCM} as in Lemma \ref{lem6.3} then $\psi(x,0,-k)=\psi^*(x,0,k)$ where $\psi^*$ is a wave solution of the equation
\beq\label{adjCM}
(\p_t+\p_x^2-u)\psi^*(x,t,k)=0
\eeq
which is formally adjoint to \eqref{intCM}.

\medskip
The second characterization of the Jacobians of curves with involution is related to the 2D Toda theory. A priory, unlike the KP case, there is no obvious reason why it is not applicable to all types of involution including unramified covers. It turned out that there is an obstacle for the case unramified covers and our second theorem also gives a characterization of the Jacobians of curves with involution \emph{with} fixed points.

\begin{theorem}\label{main2}
An indecomposable, principally polarized abelian variety $(X,\theta)$
is the Jacobian of a smooth curve of genus $g$ with involution having fixed points if and only if
there exist non-zero $g$-dimensional vectors
$U\neq A \, (\bmod\,  \Lambda),\, V, \zeta$, constants $\Omega_0,\Omega_1,b_1$ such that: the condition $(A)$  of Theorem \ref{th1.2} is satisfied and

$(B)$ $(i)$ The intersection of the theta-divisor with the shifted Abelian variety $Y$, which is a closure of $\pi(Ux+\zeta)$, is reduced and is not invariant under the shift by $U$, $\Theta\cap Y\neq (\Theta+U)\cap Y$, and $(ii) $ the equation
\beq\label{Cd}
\left((\p_V\theta(z))^2+\theta(z+U)\theta(z-U)\right)|_{z\in\Theta\cap Y}=0,
\eeq
holds.

Moreover, the locus of the points $\zeta\in X$ for which the equation \eqref{Cd} holds is the locus of point for which
the equation $\zeta+\zeta^\s=K+P_1+P_2\in J(\G)$, where $(P_1, P_2)$ are points of the curves permuted by $\s$ and such that $U=A(P_2)-A(P_1)$, is satisfied.
\end{theorem}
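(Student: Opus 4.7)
We build on Theorem \ref{th1.2} and graft onto its conclusion an involution-producing argument driven by condition $(B)$, in parallel with the KP proof of Theorem \ref{thm:main}. Condition $(A)$ of the present theorem is identical to condition $(A)$ of Theorem \ref{th1.2}, which at once yields $X=J(\G)$ for a smooth curve $\G$ and identifies $U=A(P_2)-A(P_1)$ for distinct marked points $P_1,P_2\in\G$, with $V$ the vector of $b$-periods of the normalized meromorphic differential of the second kind with a double pole at $P_1$. The remaining task is equivalent to showing that $\G$ admits an involution $\s$ swapping $P_1$ and $P_2$ with at least one fixed point, and that $\z+\z^\s=K+P_1+P_2$.

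\emph{Necessity.} If such $\s$ exists, the equation $\z+\z^\s=K+P_1+P_2$ cuts out a translate of $\mathrm{Prym}(\G,\s)$ inside $J(\G)$. Starting from the explicit theta-formula \eqref{2dsolution} for the two-point Baker-Akhiezer function and using the $\s$-equivariance of the Abel map and of the normalized differential whose $b$-periods give $V$, one verifies both parts of $(B)$ via appropriate Fay-type specializations; the non-invariance $\Theta\cap Y\neq(\Theta+U)\cap Y$ encodes ramification of $\G\to\G/\s$ as opposed to an unramified cover.

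\emph{Sufficiency.} This is the hard direction. Write $\tau(x,t):=\theta(Ux+Vt+\z)$ and let $q=q(t)$ be a simple zero of $\tau(\cdot,t)$, so that $\dot q(t)=-\tau_t/\tau_x$. Theorem \ref{th1.2}$(C)$ guarantees that $\tau$ obeys the abelian Ruijsenaars-Schneider equation on its divisor. Evaluating $(B)(ii)$ at $(q(0),0)$ produces the additional quadratic identity
\[
\bigl(\tau_x(q,0)\bigr)^2\,\dot q(0)^2 \;=\; -\,\tau(q+1,0)\,\tau(q-1,0),
\]
which is the abelian counterpart of the discriminant relation defining the elliptic RS velocity up to sign, and whose $\pm$-ambiguity is the algebraic shadow of an involution. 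Introduce the 2D Toda analogues of the $\l$-periodic wave solutions $\psi(x,t,k)$ and their formal adjoints $\psi^\ast(x,t,k)$ to \eqref{intRS} along the lines of Lemma \ref{lem6.3}. The quadratic identity above propagates through the inductive recurrence for the wave coefficients and produces a local involution $k\mapsto k^\s$ of the spectral parameter near $P_1$ satisfying
\[
\psi(x,0,k^\s) \;=\; c(k)\,\psi^\ast(x,0,k).
\]
Via the 2D Toda analogue of Lemma \ref{lem6.7}, this symmetry of the commuting ring of difference operators descends to a holomorphic involution $\s\colon\G\to\G$ exchanging $P_1\leftrightarrow P_2$; the non-invariance hypothesis $\Theta\cap Y\neq(\Theta+U)\cap Y$ rules out the free-action (unramified) case and forces at least one fixed point. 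A standard Abel-Jacobi calculation then identifies the locus of admissible $\z$ with the translated Prym subvariety $\{\z+\z^\s=K+P_1+P_2\}$.

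\emph{Main obstacle.} The principal difficulty is the sufficiency step, specifically passing from the algebraic identity $(B)(ii)$ to the wave-function symmetry $\psi(x,0,k^\s)=c(k)\,\psi^\ast(x,0,k)$. In the KP case (Theorem \ref{thm:main}) the corresponding symmetry is essentially $k\mapsto -k$, forced by the second-order nature of \eqref{intCM}. Here \eqref{intRS} is first-order in $\p_t$ but contains a nontrivial shift operator, so the adjoint pairing is much less rigid and $k\mapsto k^\s$ has to be constructed order by order from the wave-coefficient recurrence. It is precisely at this stage that the non-invariance condition in $(B)(i)$ is used essentially---to exclude the unramified-cover degeneration (which would land us outside the scope of the theorem, in the Prym setting of Section 4) and to pin the involution down uniquely.
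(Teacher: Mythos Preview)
The paper does not actually prove Theorem~\ref{main2}: this is a survey, and the result is quoted from \cite{kr-inv} with only a remark on the elliptic case and a reformulation in terms of the Kummer map. So there is no ``paper's own proof'' to compare your proposal against directly. What the paper \emph{does} provide is a short sketch of the heart of the proof of the companion Theorem~\ref{thm:main} (the KP case): condition $(B)$ there forces the existence of a local coordinate $k^{-1}$ for which the $\lambda$-periodic wave solution of \eqref{intCM} satisfies $\psi(x,0,-k)=\psi^*(x,0,k)$, with $\psi^*$ a wave solution of the formally adjoint equation \eqref{adjCM}; this symmetry then descends to an involution on the spectral curve.

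Your proposal is precisely the 2D~Toda transcription of that sketch, and as a strategy it is the right one. Your derivation of the quadratic identity $\tau_x(q,0)^2\,\dot q(0)^2=-\tau(q+1,0)\,\tau(q-1,0)$ from $(B)(ii)$ is correct and is indeed the abelian analogue of the RS ``turning-point'' relation (cf.\ \cite{kz2}, which the paper cites for the elliptic case). Two points to sharpen. First, the involution you are building exchanges $P_1$ and $P_2$, so ``a local involution $k\mapsto k^\sigma$ of the spectral parameter near $P_1$'' is misleading: $k^\sigma$ is a local parameter near $P_2$, and the symmetry you want relates the wave function (with essential singularity governed by $P_1$) to the dual/adjoint wave function (governed by $P_2$). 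Second, your use of $(B)(i)$ is asserted rather than explained; the actual mechanism by which $\Theta\cap Y\neq(\Theta+U)\cap Y$ excludes the unramified case is not visible in your write-up, and this is exactly the place where, as the paper notes, an ``obstacle'' appears that distinguishes the ramified from the unramified situation. Since the paper itself defers the details to \cite{kr-inv}, your sketch is at the same level of completeness as what the paper offers for Theorem~\ref{thm:main}, and aligned with it.
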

\begin{remark} {In the case when $U$ spans an elliptic curve in the Jacobian the statement of the theorem was proved first in \cite{kz2}}.
\end{remark}
The geometric form of the characterization is the condition that \emph{the vector $(2\p_V^2 K(0)-b_2K(U)-b_3K(0))$ is orthogonal to the image under the Kummer map of the abelian subvariety $\Pi$}:
\beq\label{ortd}
\sum_{\e\in((1/2)\mathbb Z/\mathbb Z)^g}(2\p^2_V\Theta[\e,0](0)-b_2\Theta[\e,0](U)-b_3\Theta[\e,0](0)))\Theta[\e,0](z)=0,
\eeq
where $z\in \Pi$ and $b_3$ is a constant.

\section{Non-local generating problem}

Until now our main focus was on equations that arise from the \emph{local} generating properties of two-dimensional linear operators with meromorphic coefficients. The \emph{non-local} generating properties of the same linear operators do not lead directly to equations of motion for zeros of the $\tau$ function. To begin with they generate the Lax representation of these equations. That non-local perspective is known for the elliptic case. Its abelian generalization is an open and challenging problem.

Let $\D$ be a linear differential or difference operator in
two variables $(x,t)$ with coefficients which are scalar or matrix elliptic
functions of the variable $x$ (i.e. meromorphic double-periodic functions
with the periods $2\omega_{\a}, \ \a=1,2$). We do not assume any special
dependence of the coefficients with respect to the second variable.
Then it is natural to introduce a notion of \emph{double-Bloch} solutions
of the equation
\beq
\D \Psi=0. \label{gen}
\eeq
We call a \emph{meromorphic} vector-function $f(x)$ that
satisfies the following monodromy properties:
\beq
f(x+2\omega_{\alpha})=B_{\alpha} f(x), \ \  \alpha=1,2,\label{g1}
\eeq
a \emph{double-Bloch function}.  The complex numbers $B_{\alpha}$ are  called
\emph{Bloch multipliers}.  (In other words, $f$ is a meromorphic section of a
vector bundle over the elliptic curve.)

In the most general form a problem considered in the framework of elliptic pole systems is to
\emph{classify} and to \emph{construct} all the operators $L$
such that equation (\ref{gen}) has \emph{sufficiently enough} double-Bloch
solutions.

It turns out that existence of the double-Bloch solutions is so
restrictive that only in exceptional cases such solutions do exist.
A simple and general explanation of that is due to the Riemann-Roch
theorem. Let $D$ be a set of points $q_i,\ i=1,\ldots,m,$ on the elliptic
curve $\G_0$ with multiplicities $d_i$ and let $V=V(D; B_1,B_2)$ be a linear
space of the double-Bloch functions with the Bloch multipliers
$B_\a$ that have poles at $q_i$ of order less or equal to $d_i$ and
holomorphic outside $D$.
Then the dimension of $D$ is equal to:
$$
{dim} \ D={deg} \ D=\sum_i d_i.
$$
Now let $q_i$ depend on the variable $t$. Then for $f\in D(t)$ the function
$\D f$ is a double-Bloch function with the same Bloch multipliers but in
general with higher orders of poles because taking derivatives and
multiplication by the elliptic coefficients increase orders.
Therefore, the operator $\D$ defines a linear operator
$$
\D|_D: V(D(t);B_1,B_2)\longmapsto V(D'(t);B_1,B_2), \ N'=\deg D'>N=\deg D,
$$
and (\ref{gen}) is \emph{always} equivalent to an \emph{over-determined}
linear system of $N'$ equations for $N$ unknown variables which are the
coefficients $c_i=c_i(t)$ of an expansion of $\Psi\in V(t)$ with respect to
a basis of functions $f_i(t)\in V(t)$. With some exaggeration one may say
that in the soliton theory the representation of a system  in the
form of the compatibility condition of an over-determined system of the
linear problems is considered as equivalent to integrability.

In all of known examples $N'=2N$ and
the over-determined system of equations has the form
\beq
LC=kC,\ \ \p_tC=MC,\label{laxmatrix}
\eeq
where $L$ and $M$ are $N\times N$ matrix functions depending on a point $z$
of the elliptic curve as on a parameter. A compatibility
condition of (\ref{laxmatrix}) has the standard Lax form $\p_t L=[M,L]$, and is
equivalent to a finite-dimensional integrable system.

The basis in the space of the double-Bloch functions can be written in terms
of the fundamental function $\Phi(x,z)$ defined by the formula \eqref{PhiCM}
Note, that $\Phi(x,z)$ is a solution of the Lame equation:
\beq
\Big(\frac{d^2}{dx^2}-2\wp(x)\Big)\Phi(x,z)=\wp(z)\Phi(x,z). \label{lame}
\eeq
From the monodromy properties it follows that $\Phi$ considered
as a function of $z$ is double-periodic:
$$
\Phi(x,z+2\omega_{\alpha})=\Phi(x,z) ,
$$
though it is not elliptic in the classical sense due to an
essential singularity at $z=0$ for $x\neq 0$.

As a function of $x$ the function $\Phi(x,z)$ is double-Bloch function, i.e.
$$
\Phi(x+2\omega_{\alpha}, z)=T_{\alpha}(z) \Phi (x, z), \ T_{\alpha}(z)=
\exp \left(2\omega_{\a}\zeta(z)-2\zeta (\omega _{\alpha})z\right).
$$
In the fundamental domain of the lattice defined by
$2\omega_{\alpha}$ the function $\Phi(x,z)$ has a unique pole at the point
$x=0$:
\beq
\Phi(x,z)=x^{-1}+O(x). \label{j}
\eeq
The gauge transformation
$$
f(x)\longmapsto \tilde f(x)=f(x)e^{ax},
$$
where $a$ is an arbitrary constant does not change poles of any function and
transform a double Bloch-function into a double-Bloch function. If $B_{\alpha}$
are Bloch multipliers for $f$ than the Bloch multipliers for $\tilde f$ are
equal to
\beq
\tilde B_1=B_1e^{2a\omega_1},\ \ \tilde B_2=B_2 e^{2a\omega_2}. \label{gn2}
\eeq
The two pairs of Bloch multipliers that are connected with each other
through the relation (\ref{gn2}) for some $a$ are called equivalent.
Note that for all equivalent pairs of Bloch multipliers the product
$
B_1^{\omega_2} B_2^{-\omega_1}
$
is a constant depending on the equivalence class, only.

From (\ref{j}) it follows that a double-Bloch function $f(x)$ with simple
poles $q_i$ in the fundamental domain and with Bloch multipliers
$B_{\alpha}$ (such that at least one of them is not equal to $1$) may be
represented in the form:
\beq
f(x)=\sum_{i=1}^N c_i\Phi(x-q_i,z) e^{k x},\label{g2}
\eeq
where $c_i$ is a residue of $f$ at $x_i$ and $z$, $k$ are parameters
related by
\beq
B_{\alpha}=T_{\alpha}(z) e^{2\omega_{\alpha}k }. \label{g3}
\eeq
(Any pair of Bloch multipliers may be represented in the form (\ref{g3})
with an appropriate choice of the parameters $z$ and $k$.)

To prove (\ref{g2}) it is enough to note that as a function of $x$
the difference of the left and right hand sides is holomorphic in the
fundamental domain.  It is a double-Bloch function with the same Bloch
multipliers as the function $f$. But a non-trivial double-Bloch function with
at least one of the Bloch multipliers that is not equal to $1$, has at least
one pole in the fundamental domain.

\medskip

\noindent
\subsubsection*{Example: Elliptic CM system.}

Let us consider the equation \eqref{intCM} with an elliptic (in $x$) potential $u(x,t)$. Suppose that equation \eqref{intCM} has $N$ linear independent double-Bloch solutions with equivalent
Bloch multipliers and $N$ simple poles $q_i(t)$. The assumption that there exist $N$ linear independent double-Bloch solutions with equivalent Bloch multipliers implies that they can be
written in the form
\beq
\Psi=\sum_{i=1}^N c_i(t,k,z)\Phi(x-q_i(t),z)e^{kx+k^2t}, \label{psi}
\eeq
with the same $z$ but different values of the parameter $k$.

Let us substitute (\ref{psi}) into (\ref{intCM}). Then
(\ref{intCM}) is satisfied if and if we get
a function holomorphic in the fundamental domain. First of all, we
conclude that $u$ has poles at $q_i$, only.
The vanishing of the triple poles $(x-q_i)^{-3}$ implies
that $u(x,t)$ has the form
\beq
u(x,t)=2\sum_{i=1}^N\wp(x-q_i(t))     \label{ufinal}
\eeq
The vanishing of the double
poles $(x-q_i)^{-2}$ gives the equalities that can be written as a matrix
equation for the vector $C=(c_i)$:
\beq (L(t,z)+k\mathbb I)C=0\,,
\label{l}
\eeq
where $I$ is the unit matrix and the Lax matrix $L(t,z)$ is defined in \eqref{LCM}.
Finally, the vanishing of the simple poles gives the equations
\beq (\partial_t-M(t,z))C=0\,,  \label{m} \eeq where \beq
M_{ij}=\left(\wp(z)-2\sum_{j\neq
i}\wp(q_i-q_j)\right)\delta_{ij}-2(1-\delta_{ij})\Phi' (q_i-q_j ,z). \label{M}
\eeq
The existence of $N$ linear independent
solutions for (\ref{intCM}) with equivalent Bloch multipliers implies that
(\ref{l}) and (\ref{m}) have $N$ independent solutions corresponding to
different values of $k$. Hence, as a compatibility condition we get the Lax
equation $\dot L=[M,L]$ for the elliptic CM system.

\end{document}